\newtheorem{theorem}{Theorem}[section]
\newtheorem{lemma}[theorem]{Lemma}
\theoremstyle{definition}
\newtheorem{remark}[theorem]{Remark}
\renewcommand{\appendix}{\par
   \setcounter{section}{0}%
   \setcounter{subsection}{0}%
   \setcounter{subsubsection}{0}%
   \gdef\thesection{\@Alph\c@section}%
   \gdef\thesubsection{\@Alph\c@section.\@arabic\c@subsection}%
   \gdef\theHsection{\@Alph\c@section.}%
   \gdef\theHsubsection{\@Alph\c@section.\@arabic\c@subsection}%
   \csname appendixmore\endcsname
 }
\numberwithin{equation}{section}
\begin{document}

\arraycolsep=1pt

\title{\bf\Large Numerical approximations and error analysis of the Cahn-Hilliard equation with reaction rate dependent dynamic boundary conditions
\footnotetext{\hspace{-0.35cm} 2010 {\it
Mathematics Subject Classification}. 65M12; 65M06; 65N12; 65M22.
\endgraf {\it Key words and phrases.}
Cahn-Hilliard equation; Dynamic boundary conditions; Error estimates; Linear numerical
scheme; Energy stability.
}}
\author{Xuelian Bao\footnote{Corresponding author, School of Mathematical Sciences, Beijing Normal University, Beijing 100875, China (e-mail: xlbao@mail.bnu.edu.cn).}, Hui Zhang\footnote{Laboratory of Mathematics and Complex Systems, Ministry of Education and School of Mathematical Sciences, Beijing Normal University, Beijing 100875, China.}}
\date{}
\maketitle

\vspace{-0.8cm}

\begin{center}
\begin{minipage}{13cm}
{\small {\bf Abstract}\quad
We consider numerical approximations and error analysis for the Cahn-Hilliard equation with reaction rate dependent dynamic boundary conditions (P. Knopf et al., arXiv, 2020).
Based on the stabilized linearly implicit approach, a first-order in time, linear and energy stable scheme for solving this model is proposed.  The corresponding semi-discretized-in-time error estimates for the scheme are also derived.
Numerical experiments, including the simulations with different energy potentials, the comparison with the former work, the convergence results for the relaxation parameter $K\rightarrow0$ and $K\rightarrow\infty$ and the accuracy tests with respect to the time step size, are performed to validate the accuracy of
the proposed scheme and the error analysis.
}
\end{minipage}
\end{center}

\section{Introduction\label{s1}}
The Cahn-Hilliard equation, first introduced in \cite{CH1958}, was originally utilized to describe the phase separation and de-mixing processes of binary mixtures.
The standard Cahn-Hilliard equation can be written as follows:
\begin{equation}\label{CH}
\left\{
\begin{aligned}
&\phi_t=\Delta\mu, &\mbox{in}\  \Omega\times(0,T),\\
&\mu=-\varepsilon\Delta\phi+\frac{1}{\varepsilon}F'(\phi), &\mbox{in}\ \Omega\times(0,T),
\end{aligned}
\right.
\end{equation}
where the parameter $\varepsilon>0$, $\Omega\subseteq\mathbb{R}^{d}$ ($d=2,3$) denotes a bounded domain whose boundary $\Gamma=\partial\Omega$ with the unit outer vector field $\mathbf{n}$.
The function $\phi$
denotes the difference of two local relative concentrations, in order to describe the binary alloys. The regions with $\phi=\pm1$ in the domain $\Omega$ correspond to the pure phases of the materials, which are separated by a interfacial region whose thickness is proportional to $\varepsilon$.

In the Cahn-Hilliard equation, $\mu$ denotes the chemical potential in $\Omega$, which can be expressed as the Fr\'{e}chet derivative of the bulk free energy:
\begin{equation}\label{Ebulk}
E^{bulk}(\phi)=\int_{\Omega}\frac{\varepsilon}2|\nabla\phi|^2+\frac{1}{\varepsilon}F(\phi)\mbox{d}x,
\end{equation}
where $F$ denotes the potential in $\Omega$. The classical choice of $F$ is the smooth double-well potential
\begin{equation}\label{classicalF}
F(x)=\frac{1}4(x^2-1)^2, \qquad x\in \mathbb{R},
\end{equation}
which has a double-well structure with two minima at -1 and 1 and a local unstable maximum at 0.

Since the time-evolution of $\phi$ is confined in a bounded domain, suitable boundary conditions are needed.
The classical choice is the homogeneous Neumann conditions:
\begin{equation}\label{BCmu}
\partial_{\mathbf{n}}\mu=0, \quad \mbox{on}\  \Gamma\times(0,T),
\end{equation}
\begin{equation}\label{BCphi}
\partial_{\mathbf{n}}\phi=0, \quad \mbox{on}\  \Gamma\times(0,T),
\end{equation}
where $\partial_{\mathbf{n}}$ represents the outward normal derivative on $\Gamma$.
%
Obviously, the mass conservation law holds in the
bulk (i.e., in $\Omega$) with the no-flux boundary condition \eqref{BCmu}:
\begin{equation}
\int_{\Omega}\phi(t) \mbox{d}x=\int_{\Omega}\phi(0) \mbox{d}x, \quad t\in[0,T].
\end{equation}
In addition, the time evolution of the bulk
free energy $E^{bulk}$ (Eq. \eqref{Ebulk}) is decreasing with the boundary conditions \eqref{BCmu} and \eqref{BCphi}, namely,
\begin{equation}
\frac{d}{dt} E^{bulk}(\phi(t))+\int_{\Omega}|\nabla\mu|^2 \mbox{d}x=0, \quad t\in(0,T).
\end{equation}

When some particular applications (for instance, the hydrodynamic applications such as contact line problems) are taken into consideration, it's necessary to describe the short-range interactions between the mixture and the solid wall. However, the standard homogeneous Neumann conditions neglect the effects of the boundary to the bulk dynamics. Thus, several dynamic boundary conditions have been proposed and analysed in recent years, see for instance, (\cite{Racke2003}, \cite{wu2004}, \cite{Gal2006}, \cite{GMS2011}, \cite{colli2015}, \cite{colli2017}, \cite{Mininni2017}, \cite{knopf2019}, \cite{liuwu2019}, \cite{knopf2020}). These dynamic boundary conditions are based on the system with added surface free energy (\cite{fischer1997}, \cite{fischer1998}, \cite{Kenzler2001}).
The total free energy can be written as
\begin{equation}\label{Etotal}
E^{total}(\phi)=E^{bulk}(\phi)+E^{surf}(\phi),
\end{equation}
\begin{equation}\label{Esurf}
E^{surf}(\phi)=\int_{\Gamma}\frac{\delta\kappa}2|\nabla_{\Gamma}\phi|^2+\frac{1}{\delta}G(\phi)\mbox{d}S,
\end{equation}
where $\nabla_{\Gamma}$ represents the tangential or surface gradient operator on $\Gamma$, $G$ is the surface potential, $\delta$ denotes the thickness of the interfacial region on $\Gamma$ and the parameter $\kappa$ is related to the surface diffusion. When $\kappa=0$, it is related to the moving contact line problem \cite{Thompson1989}.

In the present work, we summarize three Cahn-Hilliard models with dynamic boundary conditions in detail. All the dynamic boundary conditions of the three models have a Cahn-Hilliard type structure. And they can be interpreted as an $H^{-1}$-gradient flow of the total free energy.

The first Cahn-Hilliard model with dynamic boundary conditions was proposed by G.R. Goldstein, A. Miranville, and G. Schimperna \cite{GMS2011}:
\begin{equation}\label{GMS}
\left\{
\begin{aligned}
&\phi_t=\Delta\mu, &\mbox{in}\  \Omega\times(0,T),\\
&\mu=-\varepsilon\Delta\phi+\frac{1}{\varepsilon}F'(\phi), &\mbox{in}\ \Omega\times(0,T),\\
&\phi|_{\Gamma}=\psi, &\mbox{on}\  \Gamma\times(0,T),\\
&\psi_t=\Delta_{\Gamma}\mu-\partial_{\mathbf{n}}\mu, &\mbox{on}\  \Gamma\times(0,T),\\
&\mu=-\delta\kappa\Delta_{\Gamma}\psi+\frac{1}{\delta}G'(\psi)+\varepsilon\partial_{\mathbf{n}}\phi
&\mbox{on}\  \Gamma\times(0,T).
\end{aligned}
\right.
\end{equation}
In the present work, we denote the model as the GMS model for convenience. Here, $\Delta_{\Gamma}$ denotes the Laplace-Beltrami operator on $\Gamma$. Note that
the chemical potentials in the bulk and on the boundary are the same. Moreover, the dynamic boundary conditions ensure the conservation of the total mass (namely, the sum of the bulk and boundary mass):
\begin{equation}\label{GMSmassconservation}
\int_{\Omega}\phi(t) \mbox{d}x+\int_{\Gamma}\psi(t) \mbox{d}S=\int_{\Omega}\phi(0) \mbox{d}x +\int_{\Gamma}\psi(0) \mbox{d}S, \quad \mbox{for all} \  t\in[0,T],
\end{equation}
and the energy dissipation law:
\begin{equation}\label{GMSenergylaw}
\frac{\mbox{d}}{\mbox{d}t} E^{total}(\phi,\psi)=-\|\nabla\mu\|_{\Omega}^2
-\|\nabla_{\Gamma}\mu\|_{\Gamma}^2\leq 0.
\end{equation}

The second Cahn-Hilliard model with dynamic boundary conditions was proposed by C. Liu and H. Wu \cite{liuwu2019}:
\begin{equation}\label{CHLW}
\left\{
\begin{aligned}
&\phi_t=\Delta\mu, &\mbox{in}\  \Omega\times(0,T),\\
&\mu=-\varepsilon\Delta\phi+\frac{1}{\varepsilon}F'(\phi), &\mbox{in}\ \Omega\times(0,T),\\
&\partial_{\mathbf{n}}\mu=0, &\mbox{on}\  \Gamma\times(0,T),\\
&\phi|_{\Gamma}=\psi, &\mbox{on}\  \Gamma\times(0,T),\\
&\psi_t=\Delta_{\Gamma}\mu_{\Gamma}, &\mbox{on}\  \Gamma\times(0,T),\\
&\mu_{\Gamma}=-\delta\kappa\Delta_{\Gamma}\psi+\frac{1}{\delta}G'(\psi)+\varepsilon\partial_{\mathbf{n}}\phi
&\mbox{on}\  \Gamma\times(0,T).
\end{aligned}
\right.
\end{equation}
We denote it as the Liu-Wu model for short. Here, $\mu_{\Gamma}$ denotes the chemical potential on the boundary.
The model assumes that there is no mass exchange between the bulk and the boundary, namely, $\partial_{\mathbf{n}}\mu=0$.
Different from the GMS model ($\mu=\mu_{\Gamma}$), the chemical potential $\mu$ and $\mu_{\Gamma}$ are not directly coupled. Similarly, we can obtain the following mass conservation law:
\begin{equation}\label{LWmassconservation}
\int_{\Omega}\phi(t) \mbox{d}x=\int_{\Omega}\phi(0) \mbox{d}x \quad \mbox{and} \quad \int_{\Gamma}\psi(t) \mbox{d}S=\int_{\Gamma}\psi(0) \mbox{d}S, \quad t\in[0,T],
\end{equation}
indicating that the Liu-Wu model satisfies the mass conservation law in the bulk and on the boundary respectively. Moreover, the energy dissipation law \eqref{GMSenergylaw} also holds for the Liu-Wu model. The readers can find the well-posedness results for the Liu-Wu model and the GMS model in \cite{liuwu2019} and \cite{GMS2011} respectively.

Recently, Knopf et al. \cite{knopf2020} proposed a new model, which can be interpreted as an  interpolation between the Liu-Wu model and the GMS model. It reads as follows,
\begin{equation}\label{CHK}
\left\{
\begin{aligned}
&\phi_t=\Delta\mu, \qquad &\mbox{in}\  \Omega\times(0,T),\\
&\mu=-\varepsilon\Delta\phi+\frac{1}{\varepsilon}F'(\phi), \qquad &\mbox{in}\ \Omega\times(0,T),\\
&K\partial_{\mathbf{n}}\mu=\mu_{\Gamma}-\mu, \qquad &\mbox{on}\  \Gamma\times(0,T),\\
&\phi|_{\Gamma}=\psi, \qquad &\mbox{on}\  \Gamma\times(0,T),\\
&\psi_t=\Delta_{\Gamma}\mu_{\Gamma}-\partial_{\mathbf{n}}\mu, \qquad &\mbox{on}\  \Gamma\times(0,T),\\
&\mu_{\Gamma}=-\delta\kappa\Delta_{\Gamma}\psi+\frac{1}{\delta}G'(\psi)+\varepsilon\partial_{\mathbf{n}}\phi,
\qquad &\mbox{on}\  \Gamma\times(0,T).
\end{aligned}
\right.
\end{equation}
In the present work, we use the authors' initials and refer it to be the KLLM model for convenience.
Here, in order to describe the binary alloys, $\phi$ and $\psi$ represent the phase-field order parameter or the concentration of one material component in the bulk and on the boundary, respectively.
$\mu$ and $\mu_{\Gamma}$ represent the chemical potentials in $\Omega$ and on $\Gamma$, respectively.
Notice that $\mu$ and $\mu_{\Gamma}$
are coupled by the Robin type boundary condition $K\partial_{\mathbf{n}}\mu=\mu_{\Gamma}-\mu$, where the positive parameter $K$ is the
relaxation parameter.
The equation on the boundary ($\eqref{CHK}_4$) can be viewed as a chemical reaction in a general case since it describes that one species ($\phi$) changes into another species ($\psi$) on the boundary. And $\eqref{CHK}_3$ means that there exists mass transfer between the bulk ($\phi$) and the boundary ($\psi$). Thus, the constant $1/K$ can be interpreted as the reaction rate.
%
%
%
The well-posedness of the system \eqref{CHK} and convergence to the Liu-Wu model (as $K\rightarrow\infty$) and the GMS model (as $K\rightarrow0$) in both the weak and the strong sense have been investigated by Knopf et al. \cite{knopf2020}.

The numerical approximations of the Cahn-Hilliard equation and its variants have already been well investigated. There exists extensive efficient techniques for the time discretization, such as the stabilized linearly implicit approach \cite{he2007}, the convex splitting approach  (\cite{shen2012}, \cite{grun2013}), the invariant energy quadratization (IEQ) method (\cite{ieq1}, \cite{ieq2}, \cite{zhaojia2018}) and the scalar auxiliary variable (SAV) method \cite{sav}. For the higher order scheme and more general case of the phase-field models, we refer the readers to the recent work of Gong et al. \cite{gong2020}. Moreover, X. Yang et al. have proposed efficient numerical schemes on the phase-field models with more complicated potentials (the logarithmic Flory-Huggins potential \cite{flory} and the nonlocal potential \cite{yangnonlocal}). Recently, there have been numerical approximations for the Cahn-Hilliard equation with dynamic boundary conditions ( see for instance, \cite{bao2020}, \cite{Cherfils2010}, \cite{Cherfils2014}, \cite{Israel2015}, \cite{Fukao2017} and \cite{bachelor}).
Specifically, for the Liu-Wu model,
the finite element scheme has been proposed in \cite{bachelor} and \cite{Garcke2020},
where the straightforward discretization based on piecewise linear finite element functions was utilized to simulate the model, and the corresponding nonlinear system was solved by Newton's method. A recent contribution on the numerical analysis can be found in \cite{Metzger2019}. For the KLLM model, we refer the readers to \cite{knopf2020} for the finite element numerical approximations and numerical analysis.
However, the backward implicit Euler method was used for time discretization in the finite element schemes mentioned above, where one needs to solve nonlinear systems at each time step. Recently, based on the stabilized linearly implicit approach, a linear and energy stable numerical scheme has been proposed for the Liu-Wu model \cite{bao2020} and the corresponding semi-discrete-in-time error estimates are carried out.

Inspired by the numerical scheme in \cite{bao2020}, a first-order in time, linear and energy stable scheme for solving the KLLM model is proposed in the present work.
%
Note that the scheme is highly efficient since one only needs to solve a linear equation at each time step. Numerical simulations are performed in the two-dimensional space to validate the accuracy and stability of the scheme.
We also investigate the error estimates in semi-discrete-in-time for the scheme.
To the best of the authors' knowledge, the proposed scheme is the first linear numerical scheme to solve the KLLM model and it is the first work to give the corresponding semi-discrete-in-time error estimates.

The rest of the paper is organized as follows.
We first present some notions and notations appearing in this article in Section 2. In Section 3, the stabilized scheme for the KLLM model and the energy stability are derived. The error estimates are constructed in Section 4. In Section 5, we present the numerical examples and illustrate the convergence results for $K\rightarrow0$ and $K\rightarrow\infty$. The accuracy tests are also displayed in this section. Finally, the conclusion is presented in Section 6.

\section{Preliminaries \label{s2}}

Before giving the stabilized scheme and the corresponding error analysis, we make some definitions in this section.

We consider a finite time interval $[0,T]$ and a domain $\Omega\subset\mathbb{R}^d$ ($d=2, 3$), which is a bounded domain with sufficient smooth boundary $\Gamma=\partial\Omega$ and $\mathbf{n}=\mathbf{n}(x)$ is the unit outer normal vector on $\Gamma$. In this article, we need the boundary $\Gamma$ to be of class $C^{k,1}$ with $k\geq 3$. This regularity is needed for the error estimates in Section 4.

The norm and inner product of $L^2(\Omega)$ and $L^2(\Gamma)$ are denoted by $\|\cdot\|_{\Omega}$, $(\cdot,\cdot)_{\Omega}$ and $\|\cdot\|_{\Gamma}$, $(\cdot,\cdot)_{\Gamma}$ respectively. The usual norm in $H^k(\Omega)$ and $H^k(\Gamma)$ are denoted by $\|\cdot\|_{H^k(\Omega)}$ and $\|\cdot\|_{H^k(\Gamma)}$ respectively.

Let $\tau$ be the time step size. For a sequence of functions $f^0, f^1, \ldots, f^N$ in some Hilbert space $E$, we denote the sequence by $\{f_{\tau}\}$ and define the following discrete norm for $\{f_{\tau}\}$:
\begin{equation}\label{disnorm}
\|f_{\tau}\|_{l^{\infty}(E)}=\max_{0\leq n\leq N}\bigg{(}\|f^n\|_E\bigg{)}.
\end{equation}
We denote by $C$ a generic constant that is independent of $\tau$ but possibly depends on the parameters and solutions, and use $f\lesssim g$ to say that there is a generic constant $C$ such that $f\leqslant C g$.

\section{ The Cahn-Hilliard equation with reaction rate dependent dynamic boundary conditions and its numerical scheme}\label{s3}

In this section, we first summarize the mass conservation and the energy dissipation law of the KLLM model. Then we propose the stabilized linear numerical scheme and prove the discrete energy dissipation law.

Since $\phi$ is the phase-field order parameter in the bulk, denote its trace $\phi|_{\Gamma} \triangleq\psi$ as the order parameter on the boundary.
In the bulk $\Omega$, assume that $\phi$ is a locally conserved quantity that satisfies the continuity equation
\begin{equation}\label{phice}
\phi_t+\nabla\cdot(\phi\mathbf{u})=0, \qquad (x,t)\in \Omega\times(0,T),
\end{equation}
where $\mathbf{u}$ is the microscopic effect velocity.

We assume that there exists mass exchange between the bulk $\Omega$ and the boundary $\Gamma$, which is denoted by the flux $J=\phi\mathbf{u}$.
Assume that the mass flux is directly driven by differences between the chemical potentials in the sense that
\begin{equation}\label{uBC}
K(J\cdot\mathbf{n})=K(\phi\mathbf{u}\cdot\mathbf{n})=\mu-\mu_\Gamma, \qquad (x,t)\in\Gamma\times(0,T),
\end{equation}
where $K$ is a positive parameter describing the extent of mass exchange.
Eq. \eqref{uBC} is the boundary condition of $\mathbf{u}$.

Assume that the boundary dynamics is characterized by a local mass conservation law analogous to \eqref{phice}, such that
\begin{equation}\label{psice}
\psi_t+\nabla_{\Gamma}\cdot(\psi\mathbf{v})-J\cdot\mathbf{n}=0, \qquad (x,t)\in \Gamma\times(0,T),
\end{equation}
where $\mathbf{v}$ denotes the microscopic effective tangential velocity field on the boundary $\Gamma$. Assume that $\Gamma$ is a closed manifold, thus, there is no need to impose any boundary condition on $\mathbf{v}$.

The mass is conserved in the sense that
\begin{equation}\label{massconservation}
\int_{\Omega}\phi(t)\mbox{d}x+\int_{\Gamma}\psi(t)\mbox{d}S=
\int_{\Omega}\phi(0)\mbox{d}x+\int_{\Gamma}\psi(0)\mbox{d}S, \quad \forall t\in [0,T].
\end{equation}
To this end, integrating \eqref{phice} over $\Omega$, we have
\begin{equation}\label{mass1}
\frac{d}{dt}\int_{\Omega}\phi(\cdot,t)\mbox{d}x+\int_{\Gamma} \phi
\mathbf{u}\cdot\mathbf{n}\mbox{d}S=0, \forall t\in(0,T),
\end{equation}
and integrating \eqref{psice} over $\Gamma$, we have
\begin{equation}\label{mass2}
\frac{d}{dt}\int_{\Gamma}\psi(\cdot,t)\mbox{d}S-\int_{\Gamma} J \cdot\mathbf{n}\mbox{d}S=0, \forall t\in(0,T).
\end{equation}
Combining \eqref{mass1} with \eqref{mass2} and the flux $J=\phi
\mathbf{u}$, we obtain the total mass conservation law, see \eqref{massconservation}.

%
%

Then we show the energy law of the KLLM model, where the total free energy (sum of the bulk and surface free energies) is decreasing in time. Precisely, multiplying the first equation of \eqref{CHK} by $\mu$ and integrating over $\Omega$, we get
$$
(\phi_t, \mu)_{\Omega}=(\Delta \mu, \mu)_{\Omega}=(\partial_{\mathbf{n}}\mu, \mu)_{\Gamma}-\|\nabla\mu\|_{L^2(\Omega)}^2.
$$
Since
$$(\phi_t, \mu)_{\Omega}=(\phi_t,-\varepsilon\Delta\phi+\frac{1}{\varepsilon}F'(\phi))_{\Omega},$$
$$
(\phi_t,-\varepsilon\Delta\phi)_{\Omega}=-(\varepsilon\partial_{\mathbf{n}}\phi, \phi_t)_{\Gamma}+\frac{\varepsilon}{2}\frac{\mbox{d}}{\mbox{d}t}(\int_{\Omega} |\nabla\phi|^2\mbox{d}x),
$$
$$(\phi_t, \frac{1}{\varepsilon}F'(\phi))_{\Omega}=\frac{\mbox{d}}{\mbox{d}t}(\int_{\Omega} \frac{1}{\varepsilon} F(\phi)\mbox{d}x),$$
we arrive that
\begin{equation}\label{energy1}
\frac{\mbox{d}}{\mbox{d}t}(\int_{\Omega}\frac{1}{\varepsilon} F(\phi)\mbox{d}x+\frac{\varepsilon}{2}\int_{\Omega} |\nabla\phi|^2\mbox{d}x)-(\varepsilon\partial_{\mathbf{n}}\phi, \phi_t)_{\Gamma}=(\partial_{\mathbf{n}}\mu, \mu)_{\Gamma}-\|\nabla\mu\|_{L^2(\Omega)}^2.
\end{equation}
Multiplying the boundary equation in \eqref{CHK} by $\mu_{\Gamma}$ and integrating over $\Gamma$, we get
$$
(\psi_t, \mu_{\Gamma})_{\Gamma}=(\Delta_{\Gamma} \mu_{\Gamma}, \mu_{\Gamma})_{\Gamma}-(\partial_{\mathbf{n}}\mu, \mu_{\Gamma})_{\Gamma}=-\|\nabla_{\Gamma}\mu_{\Gamma}\|_{L^2(\Gamma)}^2-(\partial_{\mathbf{n}}\mu, \mu_{\Gamma})_{\Gamma}.
$$
Since
$$(\psi_t, \mu_{\Gamma})_{\Gamma}=(\psi_t,-\delta\kappa\Delta_{\Gamma}\psi+\frac{1}{\delta}G'(\psi)+\varepsilon\partial_{\mathbf{n}}\phi)_{\Gamma},$$
$$
(\psi_t,-\delta\kappa\Delta_{\Gamma}\psi)_{\Gamma}=\frac{\delta\kappa}{2}\frac{\mbox{d}}{\mbox{d}t}(\int_{\Gamma} |\nabla_{\Gamma}\psi|^2\mbox{d}S),
$$
$$(\psi_t, \frac{1}{\delta}G'(\psi))_{\Gamma}=\frac{\mbox{d}}{\mbox{d}t}(\int_{\Gamma} \frac{1}{\delta} G(\psi)\mbox{d}S),$$
we arrive that
\begin{equation}\label{energy2}
\frac{\mbox{d}}{\mbox{d}t}(\int_{\Gamma} \frac{1}{\delta} G(\psi)\mbox{d}S +\frac{\delta\kappa}{2}\int_{\Gamma} |\nabla_{\Gamma}\psi|^2\mbox{d}S)+(\varepsilon\partial_{\mathbf{n}}\phi, \psi_t)_{\Gamma}=-(\partial_{\mathbf{n}}\mu, \mu_{\Gamma})_{\Gamma}-\|\nabla_{\Gamma}\mu_{\Gamma}\|_{L^2(\Gamma)}^2.
\end{equation}
Adding \eqref{energy1} and \eqref{energy2} together, we get
\begin{equation}\label{energy3}
\begin{aligned}
&\frac{\mbox{d}}{\mbox{d}t} (\int_{\Omega}\frac{1}{\varepsilon} F(\phi)+\frac{\varepsilon}{2} |\nabla\phi|^2\mbox{d}x+\int_{\Gamma} \frac{1}{\delta} G(\psi)+\frac{\delta\kappa}{2}|\nabla_{\Gamma}\psi|^2\mbox{d}S)\\
&=-\|\nabla\mu\|_{L^2(\Omega)}^2
-\|\nabla_{\Gamma}\mu_{\Gamma}\|_{L^2(\Gamma)}^2+(\partial_{\mathbf{n}}\mu, \mu-\mu_{\Gamma})_{\Gamma}\\
&=-\|\nabla\mu\|_{L^2(\Omega)}^2
-\|\nabla_{\Gamma}\mu_{\Gamma}\|_{L^2(\Gamma)}^2 -K\|\partial_{\mathbf{n}}\mu\|_{L^2(\Gamma)}^2.
\end{aligned}
\end{equation}
Since $K>0$, we arrive at
$$\frac{\mbox{d}}{\mbox{d}t} (\int_{\Omega}\frac{1}{\varepsilon} F(\phi)+\frac{\varepsilon}{2} |\nabla\phi|^2\mbox{d}x+\int_{\Gamma} \frac{1}{\delta} G(\psi)+\frac{\delta\kappa}{2}|\nabla_{\Gamma}\psi|^2\mbox{d}S)\leq 0,$$
namely,
$$\frac{\mbox{d}}{\mbox{d}t} [E^{bulk}(\phi)+E^{surf}(\psi)]\leq 0.$$

Now we present the numerical scheme for the KLLM model (namely, Eq. \eqref{CHK}).
The scheme can be written as follows,
\begin{eqnarray}\label{SIscheme1}
&&\frac{\phi^{n+1}-\phi^{n}}{\tau}=\Delta\mu^{n+1}, \quad \mbox{in}\  \Omega,\\
\label{SIscheme2}
&&\mu^{n+1}=-\varepsilon\Delta\phi^{n+1}+\frac{1}{\varepsilon}F'(\phi^{n})+s_1(\phi^{n+1}-\phi^{n}), \quad \mbox{in}\ \Omega,\\
\label{SIscheme3}
&&K\partial_{\mathbf{n}}\mu^{n+1}=\mu_{\Gamma}^{n+1}-\mu^{n+1}, \quad \mbox{on}\  \Gamma,\\
\label{SIscheme4}
&&\phi^{n+1}|_{\Gamma}=\psi^{n+1}, \quad \mbox{on}\  \Gamma,\\
\label{SIscheme5}
&&\frac{\psi^{n+1}-\psi^{n}}{\tau}=\Delta_{\Gamma}\mu_{\Gamma}^{n+1}-\partial_{\mathbf{n}}\mu^{n+1}, \quad \mbox{on}\  \Gamma,\\
\label{SIscheme6}
&&\mu_{\Gamma}^{n+1}=-\delta\kappa\Delta_{\Gamma}\psi^{n+1}+\frac{1}{\delta}G'(\psi^{n})
+\varepsilon\partial_{\mathbf{n}}\phi^{n+1}+s_2(\psi^{n+1}-\psi^{n}), \quad \mbox{on}\  \Gamma.
\end{eqnarray}
Here, $T$ is an arbitrary and fixed time, $N$ is the number of time steps and $\tau=T/N$ is the step size.

\begin{remark}
The parameters $s_1, s_2>0$. And the stabilization terms $s_1(\phi^{n+1}-\phi^{n})$ and $s_2(\psi^{n+1}-\psi^{n})$ are added in the bulk and on the boundary to enhance the stability, respectively.
\end{remark}

\begin{remark}
For the Liu-Wu model, we need to modify Eq. \eqref{SIscheme3} to be
$$
\partial_{\mathbf{n}}\mu^{n+1}=0, \quad \mbox{on}\  \Gamma,
$$
and the last term $\partial_{\mathbf{n}}\mu^{n+1}$ in \eqref{SIscheme5} vanishes.
In this article, the scheme for the Liu-Wu model reads as follows, which is the same as that in \cite{bao2020}:
\begin{eqnarray}\label{liuwuscheme1}
&&\frac{\phi^{n+1}-\phi^{n}}{\tau}=\Delta\mu^{n+1}, \quad \mbox{in}\  \Omega,\\
&&\mu^{n+1}=-\varepsilon\Delta\phi^{n+1}+\frac{1}{\varepsilon}F'(\phi^{n})+s_1(\phi^{n+1}-\phi^{n}), \quad \mbox{in}\ \Omega,\\
\label{liuwuscheme2}
&&\partial_{\mathbf{n}}\mu^{n+1}=0, \quad \mbox{on}\  \Gamma,\\
\label{liuwuscheme3}
&&\phi^{n+1}|_{\Gamma}=\psi^{n+1}, \quad \mbox{on}\  \Gamma,\\
\label{liuwuscheme4}
&&\frac{\psi^{n+1}-\psi^{n}}{\tau}=\Delta_{\Gamma}\mu_{\Gamma}^{n+1}, \quad \mbox{on}\  \Gamma,\\
\label{liuwuscheme5}
&&\mu_{\Gamma}^{n+1}=-\delta\kappa\Delta_{\Gamma}\psi^{n+1}+\frac{1}{\delta}G'(\psi^{n})
+\varepsilon\partial_{\mathbf{n}}\phi^{n+1}+s_2(\psi^{n+1}-\psi^{n}), \quad \mbox{on}\  \Gamma.
\label{liuwuscheme6}
\end{eqnarray}

For the GMS model, Eq. \eqref{SIscheme3} is modified to be
$$
\mu^{n+1}|_{\Gamma}=\mu_{\Gamma}^{n+1}, \quad \mbox{on}\  \Gamma.
$$
In this article, the scheme for the GMS model reads as follows,
\begin{eqnarray}\label{GMSscheme1}
&&\frac{\phi^{n+1}-\phi^{n}}{\tau}=\Delta\mu^{n+1}, \quad \mbox{in}\  \Omega,\\
\label{GMSscheme2}
&&\mu^{n+1}=-\varepsilon\Delta\phi^{n+1}+\frac{1}{\varepsilon}F'(\phi^{n})+s_1(\phi^{n+1}-\phi^{n}), \quad \mbox{in}\ \Omega,\\
\label{GMSscheme3}
&&\phi^{n+1}|_{\Gamma}=\psi^{n+1}, \quad \mbox{on}\  \Gamma,\\
\label{GMSscheme4}
&&\frac{\psi^{n+1}-\psi^{n}}{\tau}=\Delta_{\Gamma}\mu^{n+1}-\partial_{\mathbf{n}}\mu^{n+1}, \quad \mbox{on}\  \Gamma,\\
\label{GMSscheme5}
&&\mu^{n+1}=-\delta\kappa\Delta_{\Gamma}\psi^{n+1}+\frac{1}{\delta}G'(\psi^{n})
+\varepsilon\partial_{\mathbf{n}}\phi^{n+1}+s_2(\psi^{n+1}-\psi^{n}), \quad \mbox{on}\  \Gamma.
\end{eqnarray}

From the above schemes we can conclude that the limiting cases are included in the proposed scheme in a general sense. The proposed scheme is based on the stabilized linearly implicit approach and we use the same strategy to deal with the limit cases. Precisely, in the scheme for the Liu-Wu model and the GMS model, we deal with the linear terms implicitly and the nonlinear terms explicitly and the stabilization terms are used.

\end{remark}

We have the energy stability as follows.
\begin{theorem}\label{ener_stable}
If the parameters $s_1$ and $s_2$ satisfy
\begin{equation}\label{s1s2}
s_1\geq\frac{1}{2\varepsilon} \max_{\xi\in\mathbb{R}} F''(\xi), \  s_2\geq\frac{1}{2\delta} \max_{\eta\in\mathbb{R}} G''(\eta),
\end{equation}
the scheme (\ref{SIscheme1})-(\ref{SIscheme6}) is energy stable in the sense that
\begin{equation}
\frac{E(\phi^{n+1},\psi^{n+1})-E(\phi^{n},\psi^{n})}{\tau}\leq-\|\nabla\mu^{n+1}\|^2_{L^2(\Omega)}
-\|\nabla_{\Gamma}\mu_{\Gamma}^{n+1}\|^2_{L^2(\Gamma)}-\frac{1}{K}\|\mu^{n+1}-\mu_{\Gamma}^{n+1}\|^2_{L^2(\Gamma)},
\end{equation}
where
\begin{equation}
E(\phi^{n},\psi^{n})=\int_{\Omega}\frac{1}{\varepsilon} F(\phi^{n})+\frac{\varepsilon}{2} |\nabla\phi^{n}|^2\mbox{d}x+\int_{\Gamma} \frac{1}{\delta} G(\psi^{n})+\frac{\delta\kappa}{2}|\nabla_{\Gamma}\psi^{n}|^2\mbox{d}S
\end{equation}
\end{theorem}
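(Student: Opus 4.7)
The plan is to mimic the continuous energy derivation carried out in equations \eqref{energy1}--\eqref{energy3}, replacing time derivatives by discrete differences and exploiting the algebraic identity $2a(a-b)=a^2-b^2+(a-b)^2$. First, I would test \eqref{SIscheme1} with $\tau\mu^{n+1}$ over $\Omega$ and \eqref{SIscheme5} with $\tau\mu_\Gamma^{n+1}$ over $\Gamma$, using integration by parts to turn the Laplacian terms into $-\tau\|\nabla\mu^{n+1}\|_\Omega^2+\tau(\partial_{\mathbf{n}}\mu^{n+1},\mu^{n+1})_\Gamma$ and $-\tau\|\nabla_\Gamma\mu_\Gamma^{n+1}\|_\Gamma^2-\tau(\partial_{\mathbf{n}}\mu^{n+1},\mu_\Gamma^{n+1})_\Gamma$, respectively. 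Summing these two identities, the boundary fluxes collapse to a single cross term $\tau(\partial_{\mathbf{n}}\mu^{n+1},\mu^{n+1}-\mu_\Gamma^{n+1})_\Gamma$, which by the Robin condition \eqref{SIscheme3} equals $-\tau K\|\partial_{\mathbf{n}}\mu^{n+1}\|_\Gamma^2=-\tfrac{\tau}{K}\|\mu^{n+1}-\mu_\Gamma^{n+1}\|_\Gamma^2$; this produces exactly the last dissipative term in the claimed inequality.

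Next, on the left-hand sides I would substitute $\mu^{n+1}$ and $\mu_\Gamma^{n+1}$ using \eqref{SIscheme2} and \eqref{SIscheme6}. For the $-\varepsilon\Delta\phi^{n+1}$ contribution, integration by parts over $\Omega$ combined with the identity $(\nabla(\phi^{n+1}-\phi^n),\nabla\phi^{n+1})_\Omega=\tfrac{1}{2}(\|\nabla\phi^{n+1}\|_\Omega^2-\|\nabla\phi^n\|_\Omega^2+\|\nabla(\phi^{n+1}-\phi^n)\|_\Omega^2)$ generates the bulk gradient part of $E$ together with a spurious boundary integral $-\varepsilon(\partial_{\mathbf{n}}\phi^{n+1},\phi^{n+1}-\phi^n)_\Gamma$. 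The analogous manipulation on $\Gamma$ with $-\delta\kappa\Delta_\Gamma\psi^{n+1}$ yields the surface gradient part of $E$ and a matching boundary integral $+\varepsilon(\partial_{\mathbf{n}}\phi^{n+1},\psi^{n+1}-\psi^n)_\Gamma$ coming from the $\varepsilon\partial_{\mathbf{n}}\phi^{n+1}$ term in \eqref{SIscheme6}. These two boundary terms cancel exactly thanks to the trace condition \eqref{SIscheme4}, which gives $(\phi^{n+1}-\phi^n)|_\Gamma=\psi^{n+1}-\psi^n$.

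To handle the explicit nonlinear terms $\tfrac{1}{\varepsilon}F'(\phi^n)$ and $\tfrac{1}{\delta}G'(\psi^n)$, I would invoke the second-order Taylor expansions
\begin{equation*}
F(\phi^{n+1})-F(\phi^n)=F'(\phi^n)(\phi^{n+1}-\phi^n)+\tfrac{1}{2}F''(\xi)(\phi^{n+1}-\phi^n)^2,
\end{equation*}
and similarly for $G$, to convert $\tfrac{1}{\varepsilon}(F'(\phi^n),\phi^{n+1}-\phi^n)_\Omega$ into the increment $\tfrac{1}{\varepsilon}\int_\Omega[F(\phi^{n+1})-F(\phi^n)]\,\mathrm{d}x$ minus a remainder bounded by $\tfrac{1}{2\varepsilon}\max F''\cdot\|\phi^{n+1}-\phi^n\|_\Omega^2$. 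The stabilization contributions $s_1\|\phi^{n+1}-\phi^n\|_\Omega^2$ and $s_2\|\psi^{n+1}-\psi^n\|_\Gamma^2$ then dominate these remainders precisely under the hypothesis \eqref{s1s2}, so their net effect has a favourable sign and may be dropped to obtain the stated inequality.

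Combining everything, the sum of the left-hand sides equals $E(\phi^{n+1},\psi^{n+1})-E(\phi^n,\psi^n)$ plus a non-negative discrete dissipation (the squared differences weighted by $\tfrac{\varepsilon}{2}$, $\tfrac{\delta\kappa}{2}$, $s_1-\tfrac{1}{2\varepsilon}\max F''$ and $s_2-\tfrac{1}{2\delta}\max G''$), while the right-hand side is the desired $-\tau\|\nabla\mu^{n+1}\|_\Omega^2-\tau\|\nabla_\Gamma\mu_\Gamma^{n+1}\|_\Gamma^2-\tfrac{\tau}{K}\|\mu^{n+1}-\mu_\Gamma^{n+1}\|_\Gamma^2$. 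Dividing by $\tau$ and discarding the non-negative dissipative terms yields the claim. The main obstacle I anticipate is bookkeeping the two boundary fluxes so that the $\varepsilon\partial_{\mathbf{n}}\phi^{n+1}$ integrals cancel and the $\partial_{\mathbf{n}}\mu^{n+1}$ integrals recombine through the Robin coupling \eqref{SIscheme3}; everything else is standard stabilized-scheme manipulation.
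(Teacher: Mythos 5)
Your proposal is correct and follows essentially the same route as the paper's proof: testing \eqref{SIscheme1} and \eqref{SIscheme5} with $\mu^{n+1}$ and $\mu_\Gamma^{n+1}$, recombining the boundary fluxes through the Robin condition \eqref{SIscheme3}, cancelling the $\varepsilon\partial_{\mathbf{n}}\phi^{n+1}$ terms via the trace condition, and absorbing the Taylor remainders of $F$ and $G$ into the stabilization terms under \eqref{s1s2}. The only cosmetic difference is that you multiply by $\tau$ at the outset rather than dividing by $\tau$ at the end.
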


\begin{proof}
By taking inner product of \eqref{SIscheme1} with $\mu^{n+1}$ in $\Omega$, we have
\begin{equation}\label{proof1}
(\frac{\phi^{n+1}-\phi^{n}}{\tau}, \mu^{n+1})_{\Omega}=(\Delta\mu^{n+1}, \mu^{n+1})_{\Omega}=(\partial_{\mathbf{n}}\mu^{n+1},\mu^{n+1})_{\Gamma}-\|\nabla\mu^{n+1}\|^2_{L^2(\Omega)}.
\end{equation}
For the boundary integral term, by using \eqref{SIscheme3}, we have
$$
(\partial_{\mathbf{n}}\mu^{n+1},\mu^{n+1})_{\Gamma}=
\frac{1}K(\mu^{n+1}_{\Gamma}-\mu^{n+1},\mu^{n+1})_{\Gamma}.
$$

By using \eqref{SIscheme2}, we have
\begin{equation}\label{proof2}
(\frac{\phi^{n+1}-\phi^{n}}{\tau}, \mu^{n+1})_{\Omega}=(\frac{\phi^{n+1}-\phi^{n}}{\tau}, -\varepsilon\Delta\phi^{n+1}
+\frac{1}{\varepsilon}F'(\phi^{n})+s_1(\phi^{n+1}-\phi^{n}))_{\Omega},
\end{equation}
and
\begin{equation}\label{proof3}
(\frac{\phi^{n+1}-\phi^{n}}{\tau}, -\varepsilon\Delta\phi^{n+1})_{\Omega}=
-\varepsilon(\partial_{\mathbf{n}}\phi^{n+1}, \frac{\phi^{n+1}-\phi^{n}}{\tau})_{\Gamma}+\varepsilon(\nabla\phi^{n+1},
\frac{\nabla\phi^{n+1}-\nabla\phi^{n}}{\tau})_{\Omega}.
\end{equation}

For the boundary integral term in \eqref{proof3}, by taking the inner product of \eqref{SIscheme5} with $\mu^{n+1}_{\Gamma}$ on $\Gamma$, we obtain
\begin{equation}\label{proof4}
\begin{aligned}
(\frac{\psi^{n+1}-\psi^{n}}{\tau}, \mu^{n+1}_{\Gamma})_{\Gamma}&=
(\Delta_{\Gamma} \mu^{n+1}_{\Gamma}, \mu^{n+1}_{\Gamma})_{\Gamma}- (\partial_{\mathbf{n}}\mu^{n+1},\mu^{n+1}_{\Gamma})_{\Gamma}\\
&=-\|\nabla_{\Gamma} \mu^{n+1}_{\Gamma}\|^2_{L^2(\Gamma)}
-(\partial_{\mathbf{n}}\mu^{n+1},\mu^{n+1}_{\Gamma})_{\Gamma}.
\end{aligned}
\end{equation}

By using \eqref{SIscheme6}, we have
\begin{equation}\label{proof5}
(\frac{\psi^{n+1}-\psi^{n}}{\tau}, \mu^{n+1}_{\Gamma})_{\Gamma}=(\frac{\psi^{n+1}-\psi^{n}}{\tau},-\delta\kappa\Delta_{\Gamma}
\psi^{n+1}+\frac{1}{\delta}G'(\psi^{n})+\varepsilon\partial_{\mathbf{n}}\phi^{n+1}+s_2(\psi^{n+1}-\psi^{n}))_{\Gamma},
\end{equation}
and
\begin{equation}\label{proof6}
(\frac{\psi^{n+1}-\psi^{n}}{\tau}, -\delta\kappa\Delta_{\Gamma}\psi^{n+1})_{\Gamma}=
(\frac{\nabla_{\Gamma}\psi^{n+1}-\nabla_{\Gamma}\psi^{n}}{\tau}, \delta\kappa\nabla_{\Gamma}\psi^{n+1})_{\Gamma}.
\end{equation}

To handle the nonlinear term associated with $F'$ and $G'$ in \eqref{proof2} and \eqref{proof5}, we need the following identities
\begin{equation}\label{proof7}
\begin{aligned}
F'(\phi^{n})(\phi^{n+1}-\phi^{n})&=F(\phi^{n+1})-F(\phi^{n})-\frac{F''(\eta)}2(\phi^{n+1}-\phi^{n})^2,\\
G'(\psi^{n})(\psi^{n+1}-\psi^{n})&=G(\psi^{n+1})-G(\psi^{n})-\frac{G''(\zeta)}2(\psi^{n+1}-\psi^{n})^2,
\end{aligned}
\end{equation}
with some $\eta\in(\phi^n, \phi^{n+1})$ and $\zeta\in(\psi^n, \psi^{n+1})$.

Combining the equations mentioned above, we get
$$
\begin{aligned}
&(\frac{\phi^{n+1}-\phi^{n}}{\tau}, \mu^{n+1})_{\Omega}+(\frac{\psi^{n+1}-\psi^{n}}{\tau}, \mu^{n+1}_{\Gamma})_{\Gamma}\\
&=(\partial_{\mathbf{n}}\mu^{n+1},\mu^{n+1})_{\Gamma}
-\|\nabla\mu^{n+1}\|^2_{L^2(\Omega)}-\|\nabla_{\Gamma}\mu^{n+1}_{\Gamma}\|^2_{L^2(\Gamma)}-(\partial_{\mathbf{n}}\mu^{n+1},\mu^{n+1}_{\Gamma})_{\Gamma}\\
&=-\|\nabla\mu^{n+1}\|^2_{L^2(\Omega)}-\|\nabla_{\Gamma}\mu^{n+1}_{\Gamma}\|^2_{L^2(\Gamma)}-\frac{1}K\|\mu^{n+1}-\mu_{\Gamma}^{n+1}\|^2_{L^2(\Gamma)},
\end{aligned}
$$
and
$$
\begin{aligned}
&(\frac{\phi^{n+1}-\phi^{n}}{\tau}, \mu^{n+1})_{\Omega}+(\frac{\psi^{n+1}-\psi^{n}}{\tau}, \mu^{n+1}_{\Gamma})_{\Gamma}\\
&=\varepsilon(\nabla\phi^{n+1},
\frac{\nabla\phi^{n+1}-\nabla\phi^{n}}{\tau})_{\Omega}
+\frac{1}{\varepsilon}(F'(\phi^{n}),\frac{\phi^{n+1}-\phi^{n}}{\tau})_{\Omega}+\frac{s_1}{\tau}\|\phi^{n+1}-\phi^{n}\|^2_{L^2(\Omega)}\\
&+(\delta\kappa\nabla_{\Gamma}\psi^{n+1},\frac{\nabla_{\Gamma}\psi^{n+1}-\nabla_{\Gamma}\psi^{n}}{\tau})_{\Gamma}+\frac{1}{\delta}(G'(\psi^{n}),\frac{\psi^{n+1}-\psi^{n}}{\tau})_{\Gamma}
+\frac{s_2}{\tau}\|\psi^{n+1}-\psi^{n}\|^2_{L^2(\Gamma)}\\
&=\varepsilon(\nabla\phi^{n+1},
\frac{\nabla\phi^{n+1}-\nabla\phi^{n}}{\tau})_{\Omega}+\frac{1}{\varepsilon}
(\frac{F(\phi^{n+1})-F(\phi^{n})}{\tau},1)_{\Omega}
-\frac{1}{2\varepsilon}(F''(\eta),\frac{(\phi^{n+1}-\phi^{n})^2}{\tau})_{\Omega}\\
&+\frac{s_1}{\tau}\|\phi^{n+1}-\phi^{n}\|^2_{L^2(\Omega)}
+\delta\kappa(\nabla_{\Gamma}\psi^{n+1},\frac{\nabla_{\Gamma}\psi^{n+1}-\nabla_{\Gamma}\psi^{n}}{\tau})_{\Gamma}
+\frac{1}{\delta}(\frac{G(\psi^{n+1})-G(\psi^{n})}{\tau},1)_{\Gamma}\\
&-\frac{1}{2\delta}(G''(\zeta),\frac{(\psi^{n+1}-\psi^{n})^2}{\tau})_{\Gamma}
+\frac{s_2}{\tau}\|\psi^{n+1}-\psi^{n}\|^2_{L^2(\Gamma)}\\
&=\frac{\varepsilon}{2\tau}(\|\nabla\phi^{n+1}\|^2_{L^2(\Omega)}-
\|\nabla\phi^{n}\|^2_{L^2(\Omega)}+\|\nabla\phi^{n+1}-\nabla\phi^{n}\|^2_{L^2(\Omega)})
+\frac{1}{\varepsilon\tau}(F(\phi^{n+1})-F(\phi^{n}),1)_{\Omega}\\
&+\frac{1}{\tau}(s_1-\frac{1}{2\varepsilon}F''(\eta))\|\phi^{n+1}-\phi^{n}\|^2_{L^2(\Omega)}
+\frac{\delta\kappa}{2\tau}(\|\nabla_{\Gamma}\psi^{n+1}\|^2_{L^2(\Gamma)}-
\|\nabla_{\Gamma}\psi^{n}\|^2_{L^2(\Gamma)}+\|\nabla_{\Gamma}\psi^{n+1}-\nabla_{\Gamma}\psi^{n}\|^2_{L^2(\Gamma)})\\
&+\frac{1}{\delta\tau}(G(\psi^{n+1})-G(\psi^{n}),1)_{\Gamma}+\frac{1}{\tau}
(s_2-\frac{1}{2\delta}G''(\zeta))\|\psi^{n+1}-\psi^{n}\|^2_{L^2(\Gamma)}\\
&=\frac{1}{\tau}[E(\phi^{n+1},\psi^{n+1})-E(\phi^{n},\psi^{n})]+\frac{\varepsilon}{2\tau}
\|\nabla\phi^{n+1}-\nabla\phi^{n}\|^2_{L^2(\Omega)}
+\frac{\delta\kappa}{2\tau}\|\nabla_{\Gamma}\psi^{n+1}-\nabla_{\Gamma}\psi^{n}\|^2_{L^2(\Gamma)}\\
&+\frac{1}{\tau}(s_1-\frac{1}{2\varepsilon}F''(\eta))\|\phi^{n+1}-\phi^{n}\|^2_{L^2(\Omega)}
+\frac{1}{\tau}(s_2-\frac{1}{2\delta}G''(\zeta))\|\psi^{n+1}-\psi^{n}\|^2_{L^2(\Gamma)}.
\end{aligned}
$$
Thus, we have
$$
\begin{aligned}
&\frac{1}{\tau}[E(\phi^{n+1},\psi^{n+1})-E(\phi^{n},\psi^{n})]+\frac{\varepsilon}{2\tau}
\|\nabla\phi^{n+1}-\nabla\phi^{n}\|^2_{L^2(\Omega)}
+\frac{\delta\kappa}{2\tau}\|\nabla_{\Gamma}\psi^{n+1}-\nabla_{\Gamma}\psi^{n}\|^2_{L^2(\Gamma)}\\
&+\frac{1}{\tau}(s_1-\frac{1}{2\varepsilon}F''(\eta))\|\phi^{n+1}-\phi^{n}\|^2_{L^2(\Omega)}
+\frac{1}{\tau}(s_2-\frac{1}{2\delta}G''(\zeta))\|\psi^{n+1}-\psi^{n}\|^2_{L^2(\Gamma)}\\
&=-\|\nabla\mu^{n+1}\|^2_{L^2(\Omega)}-\|\nabla_{\Gamma}\mu^{n+1}_{\Gamma}\|^2_{L^2(\Gamma)}-\frac{1}K\|\mu^{n+1}
-\mu_{\Gamma}^{n+1}\|^2_{L^2(\Gamma)}\leq0.
\end{aligned}
$$
Therefore, under the conditions that
$$s_1\geq\frac{1}{2\varepsilon} \max_{\xi\in\mathbb{R}} F''(\xi)$$
and
$$s_2\geq\frac{1}{2\delta} \max_{\eta\in\mathbb{R}} G''(\eta),$$
we have
$$
\frac{1}{\tau}[E(\phi^{n+1},\psi^{n+1})-E(\phi^{n},\psi^{n})]\leq0,
$$
namely, the scheme (\ref{SIscheme1})-(\ref{SIscheme6}) is energy stable.
\end{proof}

\begin{remark}
The assumption \eqref{s1s2} is reasonable. The energy potential $F$ is a functional with respect to $\phi$ and $\phi$ is a function defined as $\phi: \Omega \rightarrow \mathbb{R}$. Similarly, $G$ is a functional with respect to $\psi$ and $\psi$ is a function defined as $\psi: \Gamma \rightarrow \mathbb{R}$.
And the derivatives in \eqref{s1s2} are with respect to $\phi$ and $\psi$ respectively.
Thus, if the second derivative of $F$ with respect to $\phi$ and the second derivative of $G$ with respect to $\psi$ (namely, $F''$ and $G''$) are bounded, we can choose $s_1$ and $s_2$ large enough to satisfy \eqref{s1s2}.

One example of the energy potentials $F$ and $G$ is the modified double-well potential (also called the truncated double-well potential). Here, the word 'truncated' means that it truncates $\mathbb{R}$ into three parts: $(-\infty, -1)$, $(-1, 1)$ and $(1, \infty)$ and use the quadratic functions to replace the function $\frac{1}4(\phi^2-1)^2$ on $(-\infty, -1)$ and $(1, \infty)$. It reads as follows,
\begin{equation*}
F(\phi)=\left\{\begin{aligned}
&(\phi-1)^2 \qquad \phi>1,\\
&\frac{1}4(\phi^2-1)^2 \quad -1\leq\phi\leq1,\\
&(\phi+1)^2 \qquad \phi<-1.
\end{aligned}
\right.
G(\psi)=\left\{\begin{aligned}
&(\psi-1)^2 \qquad \psi>1,\\
&\frac{1}4(\psi^2-1)^2 \quad -1\leq\psi\leq1,\\
&(\psi+1)^2 \qquad \psi<-1.
\end{aligned}
\right.
\end{equation*}
Obviously, the second derivative of $F$ with respect to $\phi$ and the second derivative of $G$ with respect to $\psi$ are bounded:
$$
\max_{\phi\in\mathbb{R}} |F''(\phi)|=\max_{\psi\in\mathbb{R}} |G''(\psi)|\leq2.
$$
Thus, we can choose $s_1$ and $s_2$ large enough, namely, $s_1>1/\varepsilon$ and $s_2>1/\delta$, so that the assumption \eqref{s1s2} is satisfied.
\end{remark}

\begin{remark}
The proposed scheme (\ref{SIscheme1})-(\ref{SIscheme6}) is first-order in time, linear and unconditionally energy stable, based on the stabilization method. The stabilization method can be directly extended to second-order schemes. However, in that case, the higher-order scheme generally cannot be unconditionally energy stable \cite{shenreview}.
\end{remark}

\section{Error estimates for the stabilized semi-discrete scheme}\label{s4}

In this section, we establish the error estimates for the functions $\phi$ and $\psi$ for the numerical scheme \eqref{SIscheme1}-\eqref{SIscheme6}. Here, the mathematics induction is utilized and the trace theorem is applied to estimate the boundary terms.

Assume that the Lipschitz properties hold for the second derivative of $F$ with respect to $\phi$ and the second derivative of $G$ with respect to $\psi$ (namely, $F''$ and $G''$), and $F''$ and $G''$ are bounded. Precisely, there exists positive constants $L_1$, $L_2$, $K_1$ and $K_2$ that
\begin{equation*}
|F^{''}(\phi_1)-F^{''}(\phi_2)|\leq K_1 |\phi_1 - \phi_2|,
\end{equation*}
\begin{equation}\label{lipassump}
|G^{''}(\psi_1)-G^{''}(\psi_2)|\leq K_2 |\psi_1 - \psi_2|, \quad \mbox{for\ } \phi_1, \phi_2, \psi_1, \psi_2 \in  \mathbb{R},
\end{equation}
\begin{equation}\label{boundassump}
\max_{\phi\in\mathbb{R}} |F^{''}(\phi)|\leq L_1, \quad \max_{\psi\in\mathbb{R}} |G^{''}(\psi)|\leq L_2.
\end{equation}
These assumptions are necessary for error estimates.

\begin{remark}
The assumptions \eqref{lipassump} - \eqref{boundassump} are reasonable. One example of the functionals $F$ and $G$, satisfying the assumptions mentioned above, is the modified double-well potential:
\begin{equation}\label{modifyFG}
F(\phi)=\left\{\begin{aligned}
&(\phi-1)^2 \qquad \phi>1,\\
&\frac{1}4(\phi^2-1)^2 \quad -1\leq\phi\leq1,\\
&(\phi+1)^2 \qquad \phi<-1.
\end{aligned}
\right.
G(\psi)=\left\{\begin{aligned}
&(\psi-1)^2 \qquad \psi>1,\\
&\frac{1}4(\psi^2-1)^2 \quad -1\leq\psi\leq1,\\
&(\psi+1)^2 \qquad \psi<-1.
\end{aligned}
\right.
\end{equation}
Obviously, the Lipschitz property holds for the second derivative of $F$ with respect to $\phi$ and the second derivative of $G$ with respect to $\psi$:
\begin{equation*}
|F^{''}(\phi_1)-F^{''}(\phi_2)|\leq 6 |\phi_1 - \phi_2|,
\end{equation*}
\begin{equation*}
|G^{''}(\psi_1)-G^{''}(\psi_2)|\leq 6 |\psi_1 - \psi_2|, \quad \mbox{for\ } \phi_1, \phi_2, \psi_1, \psi_2 \in  \mathbb{R},
\end{equation*}
and
\begin{equation*}
\max_{\phi\in\mathbb{R}} |F''(\phi)|=\max_{\psi\in\mathbb{R}} |G''(\psi)|\leq2.
\end{equation*}

\end{remark}

The PDE system \eqref{CHK} can be rewritten as the following truncated form,
\begin{eqnarray}\label{truncatePDE}
&&\frac{\phi(t^{n+1})-\phi(t^{n})}{\tau}=\Delta\mu(t^{n+1})+R_{\phi}^{n+1}, \quad \mbox{in}\  \Omega,\\
\label{truncatePDE2}
&&\mu(t^{n+1})=-\varepsilon\Delta\phi(t^{n+1})+\frac{1}{\varepsilon}F'(\phi(t^{n}))
+s_1(\phi(t^{n+1})-\phi(t^{n}))+R_{\mu}^{n+1}, \quad \mbox{in}\ \Omega,\\
\label{truncatePDE3}
&&K\partial_{\mathbf{n}}\mu(t^{n+1})=\mu_{\Gamma}(t^{n+1})-\mu(t^{n+1}) \quad \mbox{on}\  \Gamma,\\
\label{truncatePDE4}
&&\phi(t^{n+1})|_{\Gamma}=\psi(t^{n+1}), \quad \mbox{on}\  \Gamma,\\
\label{truncatePDE5}
&&\frac{\psi(t^{n+1})-\psi(t^{n})}{\tau}=\Delta_{\Gamma}\mu_{\Gamma}(t^{n+1})-\partial_{\mathbf{n}}\mu(t^{n+1})+R_{\psi}^{n+1}, \quad \mbox{on}\  \Gamma,\\
\label{truncatePDE6}
&&\mu_{\Gamma}(t^{n+1})=-\delta\kappa\Delta_{\Gamma}\psi(t^{n+1})+\frac{1}{\delta}G'(\psi(t^{n}))
+\varepsilon\partial_{\mathbf{n}}\phi(t^{n+1}) \nonumber\\
&&\qquad \qquad +s_2(\psi(t^{n+1})-\psi(t^{n}))+R_{\Gamma}^{n+1}, \quad \mbox{on}\  \Gamma,
\end{eqnarray}
where
\begin{equation}
R_{\phi}^{n+1}=\frac{\phi(t^{n+1})-\phi(t^{n})}{\tau}-\phi_t(t^{n+1}),
\end{equation}
\begin{equation}
R_{\psi}^{n+1}=\frac{\psi(t^{n+1})-\psi(t^{n})}{\tau}-\psi_t(t^{n+1}),
\end{equation}
\begin{equation}
R_{\mu}^{n+1}=\frac{1}{\varepsilon}F'(\phi(t^{n+1}))-\frac{1}{\varepsilon}F'(\phi(t^{n}))
-s_1(\phi(t^{n+1})-\phi(t^{n})),
\end{equation}
\begin{equation}
R_{\Gamma}^{n+1}=\frac{1}{\delta}G'(\psi(t^{n+1}))-\frac{1}{\delta}G'(\psi(t^{n}))
-s_2(\psi(t^{n+1})-\psi(t^{n})).
\end{equation}

We assume that the exact solution $(\phi,\psi, \mu, \mu_{\Gamma})$ of the system \eqref{CHK} is sufficiently smooth, or possesses the following regularity:
\begin{equation}
(A_1):
\begin{aligned}
&\phi,\phi_t,\phi_{tt}\in L^{\infty}(0,T; H^{m_1}(\Omega));\\
&\mu \in L^{\infty}(0,T; H^{m_2}(\Omega));\\
&\mu_{\Gamma} \in L^{\infty}(0,T; H^{m_3}(\Gamma));\\
\end{aligned}
\end{equation}
with $m_1, m_2, m_3$ sufficiently large (the assumption that $m_1\geqslant7/2$, $m_2\geqslant3/2$ and $m_3\geqslant1$ is suitable for the following error analysis). Due to the trace theorem and the linearity of the trace operator, the trace $\psi$ possesses the regularity:
\begin{equation}
(A_2):
\begin{aligned}
&\psi,\psi_t,\psi_{tt}\in L^{\infty}(0,T; H^{m_1-1/2}(\Gamma))
\end{aligned}
\end{equation}

%
%
From the Taylor expansion, it's easy to prove that

\begin{lemma}
The truncation errors satisfy
\begin{equation}
\begin{aligned}
&\|R_{\phi,\tau}\|_{l^{\infty}(H^1(\Omega))}+\|R_{\mu,\tau}\|_{l^{\infty}(H^1(\Omega))}\lesssim \tau,\\
&\|R_{\psi,\tau}\|_{l^{\infty}(H^1(\Gamma))}+\|R_{\Gamma,\tau}\|_{l^{\infty}(H^1(\Gamma))}\lesssim \tau.\\
\end{aligned}
\end{equation}
\end{lemma}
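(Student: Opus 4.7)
The plan is to treat each of the four remainder terms separately by writing the difference as an integral over $[t^n,t^{n+1}]$ and then estimating in the appropriate $H^1$-norm using the regularity assumptions $(A_1)$, $(A_2)$ together with \eqref{lipassump} and \eqref{boundassump}. The argument is essentially Taylor expansion plus the chain rule; no induction or delicate cancellation is required.

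For the pure finite-difference remainders $R_{\phi}^{n+1}$ and $R_{\psi}^{n+1}$, I would use the integral form of the Taylor remainder,
\begin{equation*}
R_{\phi}^{n+1}=\frac{\phi(t^{n+1})-\phi(t^{n})}{\tau}-\phi_{t}(t^{n+1})=-\frac{1}{\tau}\int_{t^{n}}^{t^{n+1}}(s-t^{n})\,\phi_{tt}(s)\,\mbox{d}s,
\end{equation*}
and an analogous identity for $R_{\psi}^{n+1}$ on $\Gamma$. Applying Minkowski's inequality for integrals in the $H^1$-norm and invoking $\phi_{tt}\in L^{\infty}(0,T;H^{m_1}(\Omega))$ (respectively $\psi_{tt}\in L^{\infty}(0,T;H^{m_1-1/2}(\Gamma))$) yields $\|R_{\phi}^{n+1}\|_{H^1(\Omega)}\leq\tau\|\phi_{tt}\|_{L^{\infty}(0,T;H^1(\Omega))}$ and the corresponding bound for $R_{\psi}^{n+1}$. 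Taking the supremum over $n$ gives the desired $O(\tau)$ estimate.

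For $R_{\mu}^{n+1}$ and $R_{\Gamma}^{n+1}$, I would use the fundamental theorem of calculus to rewrite
\begin{equation*}
R_{\mu}^{n+1}=\int_{t^{n}}^{t^{n+1}}\Bigl(\tfrac{1}{\varepsilon}F''(\phi(s))-s_1\Bigr)\phi_{t}(s)\,\mbox{d}s,
\end{equation*}
and similarly for $R_{\Gamma}^{n+1}$. Bounding the $L^2(\Omega)$ part is immediate from \eqref{boundassump} and yields a $\tau(L_1/\varepsilon+s_1)\|\phi_t\|_{L^{\infty}(0,T;L^2(\Omega))}$ contribution. For the gradient piece the chain rule gives
\begin{equation*}
\nabla\bigl[F''(\phi(s))\phi_{t}(s)\bigr]=F'''(\phi(s))\,\phi_{t}(s)\,\nabla\phi(s)+F''(\phi(s))\,\nabla\phi_{t}(s),
\end{equation*}
where $|F'''|\leq K_1$ almost everywhere is a consequence of the Lipschitz assumption \eqref{lipassump}. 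The second summand is controlled by $L_1\|\nabla\phi_t\|_{L^2(\Omega)}$. The first I would bound by pulling out $\|\nabla\phi\|_{L^{\infty}(\Omega)}$ in Hölder's inequality, which in turn is controlled by $\|\phi\|_{H^{m_1}(\Omega)}$ via the Sobolev embedding $H^{m_1}(\Omega)\hookrightarrow W^{1,\infty}(\Omega)$; this is where the seemingly generous requirement $m_1\geq7/2$ is used, since it comfortably exceeds $1+d/2$ for $d\in\{2,3\}$.

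The estimate for $R_{\Gamma}^{n+1}$ proceeds identically on the closed manifold $\Gamma$, with $G''$, $G'''$ in place of $F''$, $F'''$, the tangential gradient $\nabla_{\Gamma}$ in place of $\nabla$, and the surface Sobolev embedding $H^{m_1-1/2}(\Gamma)\hookrightarrow W^{1,\infty}(\Gamma)$ supplied by $(A_2)$. Collecting the four bounds and taking the $l^{\infty}$-norm in $n$ gives the claimed inequality. I do not anticipate a genuine obstacle; the only subtlety worth checking is that the Sobolev embedding guaranteeing $\nabla\phi\in L^{\infty}$ (and $\nabla_{\Gamma}\psi\in L^{\infty}$) is indeed available under $(A_1)$–$(A_2)$, which is precisely what the regularity indices $m_1\geq7/2$, $m_2\geq3/2$, $m_3\geq1$ were designed to ensure.
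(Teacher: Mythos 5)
Your proposal is correct and follows exactly the route the paper indicates (the paper only says ``from the Taylor expansion'' and omits the details): integral-form Taylor remainders for $R_{\phi}^{n+1}$, $R_{\psi}^{n+1}$, the fundamental theorem of calculus plus the chain rule for $R_{\mu}^{n+1}$, $R_{\Gamma}^{n+1}$, and the boundedness/Lipschitz assumptions on $F''$, $G''$ together with the Sobolev embeddings guaranteed by $(A_1)$--$(A_2)$. The only point worth flagging is that $F'''$ exists merely almost everywhere under \eqref{lipassump}, which you correctly note; one could equally avoid it by differencing $F''(\phi(t^{n+1}))\nabla\phi(t^{n+1})-F''(\phi(t^{n}))\nabla\phi(t^{n})$ directly, as the paper does for the analogous term $\nabla H^n$.
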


By subtracting \eqref{truncatePDE}-\eqref{truncatePDE6} from the corresponding scheme \eqref{SIscheme1}-\eqref{SIscheme6}, we derive the error equations as follows,
\begin{eqnarray}\label{erroreq}
&&\frac{1}{\tau}(e_{\phi}^{n+1}-e_{\phi}^{n})=\Delta e_{\mu}^{n+1}+R_{\phi}^{n+1}, \quad \mbox{in}\  \Omega,\\
\label{erroreq2}
&&e_{\mu}^{n+1}=-\varepsilon\Delta e_{\phi}^{n+1}+\frac{1}{\varepsilon}(F'(\phi(t^{n}))-F'(\phi^{n}))+s_1(e_{\phi}^{n+1}-e_{\phi}^{n})
+R_{\mu}^{n+1}, \quad \mbox{in}\ \Omega,\\
\label{erroreq3}
&&K\partial_{\mathbf{n}}e_{\mu}^{n+1}=e_{\Gamma}^{n+1}-e_{\mu}^{n+1}, \quad \mbox{on}\  \Gamma,\\
\label{erroreq4}
&&e_{\phi}^{n+1}|_{\Gamma}=e_{\psi}^{n+1}, \quad \mbox{on}\  \Gamma,\\
\label{erroreq5}
&&\frac{1}{\tau}(e_{\psi}^{n+1}-e_{\psi}^{n})=\Delta_{\Gamma}e_{\Gamma}^{n+1}-\partial_{\mathbf{n}}e_{\mu}^{n+1}
+R_{\psi}^{n+1}, \quad \mbox{on}\  \Gamma,\\
\label{erroreq6}
&&e_{\Gamma}^{n+1}=-\delta\kappa\Delta_{\Gamma}e_{\psi}^{n+1}+\frac{1}{\delta}(G'(\psi(t^{n}))-G'(\psi^{n}))
+\varepsilon\partial_{\mathbf{n}}e_{\phi}^{n+1} \nonumber\\
&&\qquad \quad +s_2(e_{\psi}^{n+1}-e_{\psi}^{n})+R_{\Gamma}^{n+1}, \quad \mbox{on}\  \Gamma.
\end{eqnarray}

Here, the error functions are defined as
\begin{equation}
\begin{aligned}
&e_{\phi}^n=\phi(t^{n})-\phi^n,\qquad e_{\mu}^n=\mu(t^{n})-\mu^n,\\
&e_{\psi}^n=\psi(t^{n})-\psi^n, \qquad e_{\Gamma}^n=\mu_{\Gamma}(t^{n})-\mu_{\Gamma}^n. 
\end{aligned}
\end{equation}
Obviously, we have $e_{\phi}^n|_{\Gamma}=e_{\psi}^n$. The corresponding sequence of error functions are denoted as $e_{\phi,\tau}$, $e_{\psi,\tau}$, $e_{\mu,\tau}$ and $e_{\Gamma,\tau}$.

Thus we can establish the estimates for the scheme \eqref{SIscheme1}-\eqref{SIscheme6} as follows.

\begin{theorem}
Provided that the exact solutions are sufficiently smooth, there exists some $\tau_0>0$ such that when $\tau < \tau_0$,
the solution $(\phi^m,\psi^m)$ ($0\leq m\leq \bigg{[}\frac{T}{\tau}\bigg{]}-1$) of the scheme \eqref{SIscheme1}-\eqref{SIscheme6}  satisfy the following error estimate
\begin{equation}\label{finalestimate}
\begin{aligned}
&\|e_{\phi,\tau}\|_{l^{\infty}(H^1(\Omega))}+\|e_{\psi,\tau}\|_{l^{\infty}(H^1(\Gamma))}\lesssim\tau.
\end{aligned}
\end{equation}
Here, the error functions are defined as
\begin{equation}
\begin{aligned}
&e_{\phi}^n=\phi(t^{n})-\phi^n,\qquad e_{\mu}^n=\mu(t^{n})-\mu^n,\\
&e_{\psi}^n=\psi(t^{n})-\psi^n, \qquad e_{\Gamma}^n=\mu_{\Gamma}(t^{n})-\mu_{\Gamma}^n,\\ &e_{\phi}^n|_{\Gamma}=e_{\psi}^n.
\end{aligned}
\end{equation}
The corresponding sequence of error functions are denoted as $e_{\phi,\tau}$, $e_{\psi,\tau}$, $e_{\mu,\tau}$ and $e_{\Gamma,\tau}$, and the discrete norm $\|\cdot\|_{l^{\infty}(\cdot)}$ is defined as Eq. \eqref{disnorm}.

\end{theorem}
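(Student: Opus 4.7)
The plan is to use a discrete energy method that mirrors the energy stability proof of Theorem \ref{ener_stable}, applied to the error equations \eqref{erroreq}--\eqref{erroreq6} rather than the scheme itself, and then close with a discrete Gronwall argument. The Lipschitz and boundedness hypotheses \eqref{lipassump}--\eqref{boundassump} are what allow the nonlinear $F'$- and $G'$-differences to be absorbed, and the trace theorem (guaranteed by the $C^{k,1}$ regularity of $\Gamma$ with $k\geq 3$) is what lets boundary error terms be controlled by bulk error terms.

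First I would take the $L^2(\Omega)$ inner product of \eqref{erroreq} with $e_\mu^{n+1}$ and the $L^2(\Gamma)$ inner product of \eqref{erroreq5} with $e_\Gamma^{n+1}$, integrate by parts, and use the Robin-type error equation \eqref{erroreq3} exactly as in the proof of Theorem \ref{ener_stable}; this produces
\[
\frac{1}{\tau}(e_\phi^{n+1}-e_\phi^{n},e_\mu^{n+1})_\Omega+\frac{1}{\tau}(e_\psi^{n+1}-e_\psi^{n},e_\Gamma^{n+1})_\Gamma+\|\nabla e_\mu^{n+1}\|_\Omega^2+\|\nabla_\Gamma e_\Gamma^{n+1}\|_\Gamma^2+\frac{1}{K}\|e_\mu^{n+1}-e_\Gamma^{n+1}\|_\Gamma^2=(R_\phi^{n+1},e_\mu^{n+1})_\Omega+(R_\psi^{n+1},e_\Gamma^{n+1})_\Gamma.
\]
Next I would compute the same left-hand side again, this time by substituting $e_\mu^{n+1}$ from \eqref{erroreq2} and $e_\Gamma^{n+1}$ from \eqref{erroreq6}. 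Integration by parts on $-\varepsilon\Delta e_\phi^{n+1}$ produces a surface term $-\varepsilon(\partial_{\mathbf n}e_\phi^{n+1},e_\phi^{n+1}-e_\phi^{n})_\Gamma$ that cancels against the $\varepsilon\partial_{\mathbf n}e_\phi^{n+1}$ piece coming from \eqref{erroreq6} once we use $e_\phi^{n+1}|_\Gamma=e_\psi^{n+1}$. Applying the algebraic identity $(a-b,a)=\tfrac12(|a|^2-|b|^2+|a-b|^2)$ to the $\nabla e_\phi$ and $\nabla_\Gamma e_\psi$ terms produces the telescoping discrete-energy quantity for the errors.

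Equating the two representations of the left-hand side then yields, after some rearrangement,
\[
\frac{\varepsilon}{2\tau}\bigl(\|\nabla e_\phi^{n+1}\|_\Omega^2-\|\nabla e_\phi^{n}\|_\Omega^2\bigr)+\frac{\delta\kappa}{2\tau}\bigl(\|\nabla_\Gamma e_\psi^{n+1}\|_\Gamma^2-\|\nabla_\Gamma e_\psi^{n}\|_\Gamma^2\bigr)+(\text{nonnegative terms})=(\text{nonlinear and truncation remainders}).
\]
The nonlinear remainders have the form $\tfrac{1}{\tau\varepsilon}(F'(\phi(t^n))-F'(\phi^{n}),e_\phi^{n+1}-e_\phi^{n})_\Omega$ and its boundary analogue, which by the mean value theorem and \eqref{boundassump} are bounded by $\tfrac{L_1}{\tau\varepsilon}\|e_\phi^{n}\|_\Omega\|e_\phi^{n+1}-e_\phi^{n}\|_\Omega$; these are absorbed using Young's inequality into the stabilization terms $\tfrac{s_1}{\tau}\|e_\phi^{n+1}-e_\phi^{n}\|_\Omega^2$ (respectively $\tfrac{s_2}{\tau}\|e_\psi^{n+1}-e_\psi^{n}\|_\Gamma^2$) that appear with a good sign on the left, together with a residual $\tfrac{C}{\varepsilon^2}\|e_\phi^{n}\|_\Omega^2$ that will feed the Gronwall loop. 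Truncation terms such as $(R_\phi^{n+1},e_\mu^{n+1})_\Omega$ are controlled by writing $e_\mu^{n+1}=\bar e_\mu^{n+1}+(e_\mu^{n+1}-\bar e_\mu^{n+1})$, applying Poincaré--Wirtinger to the zero-mean part to pass from $\|\cdot\|_\Omega$ to $\|\nabla e_\mu^{n+1}\|_\Omega$ (absorbed into the dissipation), and handling the mean via testing \eqref{erroreq} with $1$; the boundary residual $(R_\psi^{n+1},e_\Gamma^{n+1})_\Gamma$ is treated analogously on $\Gamma$. For the $-\tfrac{1}{\tau\delta}(G'(\psi(t^n))-G'(\psi^n), e_\psi^{n+1}-e_\psi^n)_\Gamma$ contribution the trace theorem is used to relate $\|e_\psi^{n}\|_\Gamma$ to $\|e_\phi^{n}\|_{H^1(\Omega)}$.

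Finally I would multiply through by $\tau$, sum over $n=0,\dots,m-1$ (with zero initial error), observe that the telescoping and dissipative terms combine to give a uniform bound on $\|\nabla e_\phi^m\|_\Omega^2+\|\nabla_\Gamma e_\psi^m\|_\Gamma^2$ in terms of $\sum_{n}\tau\bigl(\|e_\phi^{n}\|_{H^1(\Omega)}^2+\|e_\psi^{n}\|_{H^1(\Gamma)}^2\bigr)+C\tau^2$, and then close by the discrete Gronwall inequality to obtain the claimed $O(\tau)$ rate in $\ell^\infty(H^1)$. The induction is needed only in the mild form of the standard $n\Rightarrow n+1$ step in Gronwall, since \eqref{boundassump} already provides a global Lipschitz constant so no a priori $L^\infty$ bound on $\phi^n,\psi^n$ has to be bootstrapped. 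The main technical obstacle I expect is the simultaneous handling of the bulk--boundary coupling: the Robin term $\tfrac{1}{K}\|e_\mu^{n+1}-e_\Gamma^{n+1}\|_\Gamma^2$ is helpful when it sits on the left but appears with the wrong sign whenever the substitution is done in the opposite order, and ensuring that the cancellation of the $\varepsilon\partial_{\mathbf n}e_\phi^{n+1}$ contribution is clean — together with the requirement that the estimate remain uniform as $K\to 0$ or $K\to\infty$ — is the delicate point that forces careful bookkeeping in Steps 3 and 4.
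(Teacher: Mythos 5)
Your overall skeleton (test the error equations with $e_\mu^{n+1}$, $e_\Gamma^{n+1}$, substitute from \eqref{erroreq2} and \eqref{erroreq6}, telescope, Gronwall) matches the paper's, but the step where you handle the nonlinear remainder contains a fatal scaling error. You bound $\tfrac{1}{\tau\varepsilon}(F'(\phi(t^n))-F'(\phi^{n}),e_\phi^{n+1}-e_\phi^{n})_\Omega$ by $\tfrac{L_1}{\tau\varepsilon}\|e_\phi^{n}\|_\Omega\|e_\phi^{n+1}-e_\phi^{n}\|_\Omega$ and absorb it into $\tfrac{s_1}{\tau}\|e_\phi^{n+1}-e_\phi^{n}\|_\Omega^2$, claiming a residual $\tfrac{C}{\varepsilon^2}\|e_\phi^{n}\|_\Omega^2$. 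Young's inequality at this scaling necessarily leaves $\tfrac{C}{\tau\varepsilon^2}\|e_\phi^{n}\|_\Omega^2$ -- the $1/\tau$ cannot be dropped. After multiplying by $\tau$ and summing, the Gronwall coefficient is $O(1)$ per time step instead of $O(\tau)$, and the discrete Gronwall lemma then produces a factor $e^{CT/\tau}$, so the argument does not close. This is precisely the obstruction the paper's proof is built around: in estimates \eqref{A4_1_1}--\eqref{A4_2_1} and \eqref{A5_1}--\eqref{A5_2} the quotient $(e_\phi^{n+1}-e_\phi^{n})/\tau$ is replaced by $\Delta e_\mu^{n+1}+R_\phi^{n+1}$ via \eqref{erroreq} and integrated by parts, so every resulting term carries an explicit factor of $\tau$ and pairs with $\nabla e_\mu^{n+1}$ or $\partial_{\mathbf{n}}e_\mu^{n+1}$, which the dissipation on the left can absorb. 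That maneuver is also where the Lipschitz continuity of $F''$ (needed to bound $\|\nabla H^n\|_\Omega$), the trace theorem, and the genuine induction hypothesis $\|e_\phi^{n}\|_{H^1(\Omega)}\lesssim\tau$ enter -- so the induction is not merely the ``mild'' Gronwall step you describe.

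Two secondary problems. First, your Poincar\'e--Wirtinger treatment of $(R_\phi^{n+1},e_\mu^{n+1})_\Omega$ needs the mean of $e_\mu^{n+1}$, but testing \eqref{erroreq} with $1$ yields $\int_\Gamma\partial_{\mathbf{n}}e_\mu^{n+1}\,\mathrm{d}S$, not that mean; extracting the mean from \eqref{erroreq2} instead produces $\varepsilon\int_\Gamma\partial_{\mathbf{n}}e_\phi^{n+1}\,\mathrm{d}S$, which is not controlled by the available quantities. The paper avoids this entirely by substituting \eqref{erroreq2} into the pairing and exploiting the cancellation $R_\phi^{n+1}|_\Gamma=R_\psi^{n+1}$ of the two $\partial_{\mathbf{n}}e_\phi^{n+1}$ boundary terms (see the estimate of term $A_2$). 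Second, your telescoping identity controls only the gradient seminorms $\|\nabla e_\phi^{m}\|_\Omega$ and $\|\nabla_\Gamma e_\psi^{m}\|_\Gamma$; since the bulk mean of $e_\phi^{n}$ is not conserved (only the combined bulk-plus-surface mass is), this does not yield the claimed $l^\infty(H^1)$ bound. The paper supplies the missing $L^2$ parts by additionally testing \eqref{erroreq} with $\varepsilon\tau e_\phi^{n+1}$ and \eqref{erroreq5} with $\varepsilon\tau e_\psi^{n+1}$.
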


\begin{proof}
We use the mathematical induction to prove this theorem.
When $m=0$, we have $e_{\phi}^0=e_{\psi}^0=\nabla e_{\phi}^0=\nabla_{\Gamma} e_{\psi}^0=0$. Obviously, \eqref{finalestimate} holds.
Assuming that \eqref{finalestimate} holds for all $n\leq m$, we need to show that \eqref{finalestimate} holds for $e_{\phi}^{m+1}$ and $e_{\psi}^{m+1}$.

For each $n\leq m$, by taking the $L^2$ inner product of \eqref{erroreq} with $\tau e_{\mu}^{n+1}$ in $\Omega$, we obtain
$$
(e_{\phi}^{n+1}-e_{\phi}^{n}, e_{\mu}^{n+1})_{\Omega}+\tau\|\nabla e_{\mu}^{n+1}\|_{\Omega}^2=\tau(\partial_{\mathbf{n}}e_{\mu}^{n+1}, e_{\mu}^{n+1})_{\Gamma}+\tau(R_{\phi}^{n+1},e_{\mu}^{n+1})_{\Omega}.
$$
By taking the $L^2$ inner product of \eqref{erroreq} with $\varepsilon\tau e_{\phi}^{n+1}$ in $\Omega$, we obtain
$$
\begin{aligned}
\frac{\varepsilon}2(\| e_{\phi}^{n+1}\|_{\Omega}^2-\|e_{\phi}^{n}\|_{\Omega}^2+\| e_{\phi}^{n+1}-e_{\phi}^{n}\|_{\Omega}^2)&=-\varepsilon\tau(\nabla e_{\mu}^{n+1},\nabla e_{\phi}^{n+1})_{\Omega}\\
&+\varepsilon\tau (\partial_{\mathbf{n}}e_{\mu}^{n+1}, e_{\phi}^{n+1})_{\Gamma}+\varepsilon\tau(R_{\phi}^{n+1},e_{\phi}^{n+1})_{\Omega}.
\end{aligned}
$$
By taking the $L^2$ inner product of \eqref{erroreq2} with $-(e_{\phi}^{n+1}-e_{\phi}^{n})$ in $\Omega$, we obtain
$$
\begin{aligned}
&-(e_{\mu}^{n+1}, e_{\phi}^{n+1}-e_{\phi}^{n})_{\Omega}+\frac{\varepsilon}2(\| \nabla e_{\phi}^{n+1}\|_{\Omega}^2-\|\nabla e_{\phi}^{n}\|_{\Omega}^2+\| \nabla e_{\phi}^{n+1}-\nabla e_{\phi}^{n}\|_{\Omega}^2)+s_1\|e_{\phi}^{n+1}-e_{\phi}^{n}\|_{\Omega}^2=\\
&\varepsilon(\partial_{\mathbf{n}}e_{\phi}^{n+1},
e_{\phi}^{n+1}-e_{\phi}^{n})_{\Gamma}-\frac{1}{\varepsilon}(F'(\phi(t^{n}))-F'(\phi^{n}), e_{\phi}^{n+1}-e_{\phi}^{n})_{\Omega}-(R_{\mu}^{n+1},e_{\phi}^{n+1}-e_{\phi}^{n})_{\Omega}.
\end{aligned}
$$


By combining the equations above, we derive
 \begin{equation}\label{errorin}
\begin{aligned}
&\frac{\varepsilon}2(\| \nabla e_{\phi}^{n+1}\|_{\Omega}^2-\|\nabla e_{\phi}^{n}\|_{\Omega}^2+\| \nabla e_{\phi}^{n+1}-\nabla e_{\phi}^{n}\|_{\Omega}^2)+s_1\|e_{\phi}^{n+1}-e_{\phi}^{n}\|_{\Omega}^2\\
&+\frac{\varepsilon}2(\| e_{\phi}^{n+1}\|_{\Omega}^2-\| e_{\phi}^{n}\|_{\Omega}^2+\| e_{\phi}^{n+1}- e_{\phi}^{n}\|_{\Omega}^2)+\tau\|\nabla e_{\mu}^{n+1}\|_{\Omega}^2\\
&=\tau(\partial_{\mathbf{n}}e_{\mu}^{n+1}, e_{\mu}^{n+1})_{\Gamma}+  \tau(R_{\phi}^{n+1}, e_{\mu}^{n+1})_{\Omega}-\varepsilon\tau(\nabla e_{\mu}^{n+1}, \nabla e_{\phi}^{n+1})_{\Omega}\\
&+\varepsilon\tau (\partial_{\mathbf{n}}e_{\mu}^{n+1}, e_{\phi}^{n+1})_{\Gamma}+\varepsilon\tau(R_{\phi}^{n+1},e_{\phi}^{n+1})_{\Omega}+\varepsilon(\partial_{\mathbf{n}}e_{\phi}^{n+1},
e_{\phi}^{n+1}-e_{\phi}^{n})_{\Gamma}\\
&-\frac{1}{\varepsilon}(F'(\phi(t^{n}))-F'(\phi^{n}), e_{\phi}^{n+1}-e_{\phi}^{n})_{\Omega}-(R_{\mu}^{n+1}, e_{\phi}^{n+1}-e_{\phi}^{n})_{\Omega}.
\end{aligned}
\end{equation}

For the boundary term, by taking the $L^2$ inner product of \eqref{erroreq5} with $\tau e_{\Gamma}^{n+1}$ on $\Gamma$, we obtain
$$
(e_{\psi}^{n+1}-e_{\psi}^{n}, e_{\Gamma}^{n+1})_{\Gamma}+\tau\|\nabla_{\Gamma} e_{\Gamma}^{n+1}\|_{\Gamma}^2+\tau(\partial_{\mathbf{n}}e_{\mu}^{n+1}, e_{\Gamma}^{n+1})_{\Gamma}=\tau(R_{\psi}^{n+1},e_{\Gamma}^{n+1})_{\Gamma}.
$$
By taking the $L^2$ inner product of \eqref{erroreq5} with $\varepsilon\tau e_{\psi}^{n+1}$ on $\Gamma$, we obtain
$$
\begin{aligned}
\frac{\varepsilon}2(\| e_{\psi}^{n+1}\|_{\Gamma}^2-\|e_{\psi}^{n}\|_{\Gamma}^2+\| e_{\psi}^{n+1}-e_{\psi}^{n}\|_{\Gamma}^2)&=-\varepsilon\tau(\nabla_{\Gamma} e_{\Gamma}^{n+1},\nabla_{\Gamma} e_{\psi}^{n+1})_{\Gamma}\\
&-\varepsilon\tau(\partial_{\mathbf{n}}e_{\mu}^{n+1}, e_{\psi}^{n+1})_{\Gamma}
+\varepsilon\tau(R_{\psi}^{n+1},e_{\psi}^{n+1})_{\Gamma},
\end{aligned}
$$
where the boundary terms vanish due to $\Gamma$ is closed.
By taking the $L^2$ inner product of \eqref{erroreq6} with $-(e_{\psi}^{n+1}-e_{\psi}^{n})$ on $\Gamma$, we obtain
$$
\begin{aligned}
&-(e_{\Gamma}^{n+1}, e_{\psi}^{n+1}-e_{\psi}^{n})_{\Gamma}+\frac{\delta\kappa}2(\| \nabla_{\Gamma} e_{\psi}^{n+1}\|_{\Gamma}^2-\|\nabla_{\Gamma} e_{\psi}^{n}\|_{\Gamma}^2+\| \nabla_{\Gamma} e_{\psi}^{n+1}-\nabla_{\Gamma} e_{\psi}^{n}\|_{\Gamma}^2)
+s_2\|e_{\psi}^{n+1}-e_{\psi}^{n}\|_{\Gamma}^2\\
&=-\varepsilon(\partial_{\mathbf{n}}e_{\phi}^{n+1},
e_{\psi}^{n+1}-e_{\psi}^{n})_{\Gamma}-\frac{1}{\delta}(G'(\psi(t^{n}))-G'(\psi^{n}), e_{\psi}^{n+1}-e_{\psi}^{n})_{\Gamma}-(R_{\Gamma}^{n+1},e_{\psi}^{n+1}-e_{\psi}^{n})_{\Gamma}.
\end{aligned}
$$

%
By combining the equations above, we derive
 \begin{equation}\label{errorboun}
\begin{aligned}
&\frac{\delta\kappa}2(\| \nabla_{\Gamma} e_{\psi}^{n+1}\|_{\Gamma}^2-\|\nabla_{\Gamma} e_{\psi}^{n}\|_{\Gamma}^2+\| \nabla_{\Gamma} e_{\psi}^{n+1}-\nabla_{\Gamma} e_{\psi}^{n}\|_{\Gamma}^2)+s_2\|e_{\psi}^{n+1}-e_{\psi}^{n}\|_{\Gamma}^2\\
&+\frac{\varepsilon}2(\| e_{\psi}^{n+1}\|_{\Gamma}^2-\| e_{\psi}^{n}\|_{\Gamma}^2+\| e_{\psi}^{n+1}- e_{\psi}^{n}\|_{\Gamma}^2)+\tau\|\nabla_{\Gamma} e_{\Gamma}^{n+1}\|_{\Gamma}^2+\tau(\partial_{\mathbf{n}}e_{\mu}^{n+1}, e_{\Gamma}^{n+1})_{\Gamma}\\
&=\tau(R_{\psi}^{n+1},e_{\Gamma}^{n+1})_{\Gamma}-\varepsilon\tau(\nabla_{\Gamma} e_{\Gamma}^{n+1},\nabla_{\Gamma} e_{\psi}^{n+1})_{\Gamma}-\varepsilon\tau( \partial_{\mathbf{n}}e_{\mu}^{n+1}, e_{\psi}^{n+1})_{\Gamma}\\
&+\tau\varepsilon(R_{\psi}^{n+1},e_{\psi}^{n+1})_{\Gamma}
-\frac{1}{\delta}(G'(\psi(t^{n}))-G'(\psi^{n}), e_{\psi}^{n+1}-e_{\psi}^{n})_{\Gamma}\\
&-\varepsilon(\partial_{\mathbf{n}}e_{\phi}^{n+1},
e_{\psi}^{n+1}-e_{\psi}^{n})_{\Gamma}-(R_{\Gamma}^{n+1},e_{\psi}^{n+1}-e_{\psi}^{n})_{\Gamma}.
\end{aligned}
\end{equation}

By combining \eqref{errorin} and \eqref{errorboun} together, we derive
\begin{equation}\label{errorzong}
\begin{aligned}
&\frac{\varepsilon}2(\| \nabla e_{\phi}^{n+1}\|_{\Omega}^2-\|\nabla e_{\phi}^{n}\|_{\Omega}^2+\| \nabla e_{\phi}^{n+1}-\nabla e_{\phi}^{n}\|_{\Omega}^2)+\frac{\varepsilon}2(\| e_{\phi}^{n+1}\|_{\Omega}^2-\| e_{\phi}^{n}\|_{\Omega}^2+\| e_{\phi}^{n+1}- e_{\phi}^{n}\|_{\Omega}^2)\\
&+\frac{\delta\kappa}2(\| \nabla_{\Gamma} e_{\psi}^{n+1}\|_{\Gamma}^2-\|\nabla_{\Gamma} e_{\psi}^{n}\|_{\Gamma}^2+\| \nabla_{\Gamma} e_{\psi}^{n+1}-\nabla_{\Gamma} e_{\psi}^{n}\|_{\Gamma}^2)\\
&+\frac{\varepsilon}2(\| e_{\psi}^{n+1}\|_{\Gamma}^2-\| e_{\psi}^{n}\|_{\Gamma}^2+\| e_{\psi}^{n+1}- e_{\psi}^{n}\|_{\Gamma}^2)
+s_1\|e_{\phi}^{n+1}-e_{\phi}^{n}\|_{\Omega}^2+s_2\|e_{\psi}^{n+1}-e_{\psi}^{n}\|_{\Gamma}^2\\
&+\tau\|\nabla e_{\mu}^{n+1}\|_{\Omega}^2+\tau\|\nabla_{\Gamma} e_{\Gamma}^{n+1}\|_{\Gamma}^2+K\tau\|\partial_{\mathbf{n}}e_{\mu}^{n+1}\|_{\Gamma}^2\\
&=\varepsilon\tau(R_{\phi}^{n+1},e_{\phi}^{n+1})_{\Omega}+\tau\varepsilon(R_{\psi}^{n+1},e_{\psi}^{n+1})_{\Gamma}
\quad (:=\mbox{term }A_1)\\
&+\tau(R_{\phi}^{n+1},e_{\mu}^{n+1})_{\Omega}+\tau(R_{\psi}^{n+1},e_{\Gamma}^{n+1})_{\Gamma}\quad (:=\mbox{term }A_2)\\
&-\varepsilon\tau(\nabla e_{\mu}^{n+1},\nabla e_{\phi}^{n+1})_{\Omega}-\varepsilon\tau(\nabla_{\Gamma} e_{\Gamma}^{n+1},\nabla_{\Gamma} e_{\psi}^{n+1})_{\Gamma} \quad (:=\mbox{term }A_3)\\
&-\frac{1}{\varepsilon}(F'(\phi(t^{n}))-F'(\phi^{n}), e_{\phi}^{n+1}-e_{\phi}^{n})_{\Omega}-(R_{\mu}^{n+1},e_{\phi}^{n+1}-e_{\phi}^{n})_{\Omega}
\quad (:=\mbox{term }A_4)\\
&-\frac{1}{\delta}(G'(\psi(t^{n}))-G'(\psi^{n}), e_{\psi}^{n+1}-e_{\psi}^{n})_{\Gamma}-(R_{\Gamma}^{n+1},e_{\psi}^{n+1}-e_{\psi}^{n})_{\Gamma}
\quad (:=\mbox{term }A_5).
\end{aligned}
\end{equation}

For the term $A_1$, we have
\begin{equation}\label{A1}
\begin{aligned}
&\varepsilon\tau(R_{\phi}^{n+1},e_{\phi}^{n+1})_{\Omega}+\tau\varepsilon(R_{\psi}^{n+1},e_{\psi}^{n+1})_{\Gamma}\\
&\leq \varepsilon\tau\|R_{\phi}^{n+1}\|_{\Omega}\|e_{\phi}^{n+1}\|_{\Omega}+
\varepsilon\tau\|R_{\psi}^{n+1}\|_{\Gamma}\|e_{\psi}^{n+1}\|_{\Gamma}\\
& \leq \frac{\varepsilon\tau}{2} \|e_{\phi}^{n+1}\|_{\Omega}^2
+\frac{\varepsilon\tau}{2} \|e_{\psi}^{n+1}\|_{\Gamma}^2+C_1\varepsilon \tau^3,
\end{aligned}
\end{equation}
where $C_1$ is a constant independent of $\tau$ and $\varepsilon$. Here, we use the estimates for the truncation terms $R_{\phi}^{n+1}$ and $R_{\psi}^{n+1}$.

In this section, we define $H^n=F'(\phi(t^{n}))-F'(\phi^{n})$ for simplicity. It can be rewritten as
\begin{equation}
H^n=e_{\phi}^{n}\int_0^1 F''(s\phi(t^{n})+(1-s)\phi^{n}) ds.
\end{equation}
Then we have $\|H^n\|_{\Omega}\lesssim\|e_{\phi}^{n}\|_{\Omega}$ since $F''$ is bounded.
By taking the gradient of $H^n$, we have
\begin{equation}
\begin{aligned}
\nabla H^n&=F''(\phi(t^{n}))\nabla \phi(t^{n})-F''(\phi^{n})\nabla\phi^{n}=(F''(\phi(t^{n}))-F''(\phi^{n}))\nabla \phi(t^{n})+F''(\phi^{n})\nabla e_{\phi}^{n}.
\end{aligned}
\end{equation}
Since $F''$ is bounded and Lipschitz and $\phi\in L^{\infty}(0,T; H^{m_1}(\Omega))$ with $m_1$ sufficiently large, we have
\begin{equation}
\|\nabla H^n\|_{\Omega}\lesssim\|e_{\phi}^{n}\|_{\Omega}+\|\nabla e_{\phi}^{n}\|_{\Omega}.
\end{equation}
Similarly, we define $\tilde{H}^n=G'(\psi(t^{n}))-G'(\psi^{n})$. Since $G''$ is bounded and Lipschitz and $\psi\in L^{\infty}(0,T; H^{m_1-1/2}(\Gamma))$ with $m_1$ sufficiently large, we have
\begin{equation}
\begin{aligned}
&\|\tilde{H}^n\|_{\Gamma}\lesssim\|e_{\psi}^{n}\|_{\Gamma},\\
&\|\nabla_{\Gamma} \tilde{H}^n\|_{\Gamma}\lesssim\|e_{\psi}^{n}\|_{\Gamma}+\|\nabla_{\Gamma} e_{\psi}^{n}\|_{\Gamma}.
\end{aligned}
\end{equation}

For the term $A_2$, we have
\begin{equation}\label{A2}
\begin{aligned}
&\tau(R_{\phi}^{n+1},e_{\mu}^{n+1})_{\Omega}+\tau(R_{\psi}^{n+1},e_{\Gamma}^{n+1})_{\Gamma}\\
&=\tau(R_{\phi}^{n+1}, -\varepsilon\Delta e_{\phi}^{n+1}+\frac{1}{\varepsilon}H^n
+s_1(e_{\phi}^{n+1}-e_{\phi}^{n})+R_{\mu}^{n+1})_{\Omega}\\
&+\tau(R_{\psi}^{n+1}, -\delta\kappa\Delta_{\Gamma}e_{\psi}^{n+1}+\frac{1}{\delta}\tilde{H}^n
+\varepsilon\partial_{\mathbf{n}}e_{\phi}^{n+1}
+s_2(e_{\psi}^{n+1}-e_{\psi}^{n})+R_{\Gamma}^{n+1})_{\Gamma}\\
&=\varepsilon\tau(\nabla R_{\phi}^{n+1}, \nabla e_{\phi}^{n+1})_{\Omega}-\varepsilon\tau(\partial_{\mathbf{n}}e_{\phi}^{n+1}, R_{\phi}^{n+1})_{\Gamma}+\frac{\tau}{\varepsilon}(H^n, R_{\phi}^{n+1})_{\Omega}+
s_1\tau(R_{\phi}^{n+1}, e_{\phi}^{n+1}-e_{\phi}^{n})_{\Omega}\\
&+\tau(R_{\phi}^{n+1}, R_{\mu}^{n+1})_{\Omega}+\tau\delta\kappa(\nabla_{\Gamma}R_{\psi}^{n+1}, \nabla_{\Gamma} e_{\psi}^{n+1})_{\Gamma}+\frac{\tau}{\delta}(\tilde{H}^n, R_{\psi}^{n+1})_{\Gamma}+\varepsilon\tau(\partial_{\mathbf{n}}e_{\phi}^{n+1}, R_{\psi}^{n+1})_{\Gamma}\\
&+s_2\tau(R_{\psi}^{n+1}, e_{\psi}^{n+1}-e_{\psi}^{n})_{\Gamma}+\tau(R_{\psi}^{n+1},R_{\Gamma}^{n+1})_{\Gamma}\\
&\leq \varepsilon\tau \|\nabla R_{\phi}^{n+1}\|_{\Omega}\|\nabla e_{\phi}^{n+1}\|_{\Omega}+\frac{\tau}{\varepsilon}\|H^n\|_{\Omega}\| R_{\phi}^{n+1}\|_{\Omega}+s_1\tau\|R_{\phi}^{n+1}\|_{\Omega}\| e_{\phi}^{n+1}-e_{\phi}^{n}\|_{\Omega}\\
&+\tau\|R_{\phi}^{n+1}\|_{\Omega}\|R_{\mu}^{n+1}\|_{\Omega}+\tau\delta\kappa\|
\nabla_{\Gamma}R_{\psi}^{n+1}\|_{\Gamma}\|\nabla_{\Gamma} e_{\psi}^{n+1}\|_{\Gamma}+
\frac{\tau}{\delta}\|\tilde{H}^n\|_{\Gamma}\|R_{\psi}^{n+1}\|_{\Gamma}\\
&+s_2\tau\|R_{\psi}^{n+1}\|_{\Gamma}\|e_{\psi}^{n+1}-e_{\psi}^{n}\|_{\Gamma}+
\tau\|R_{\psi}^{n+1}\|_{\Gamma}\|R_{\Gamma}^{n+1}\|_{\Gamma}\\
&\leq C_2 \tau^3+\frac{\varepsilon\tau}2\|\nabla e_{\phi}^{n+1}\|_{\Omega}^2+C_3 \tau \| e_{\phi}^{n}\|_{\Omega}^2+\frac{s_1\tau}{2}\| e_{\phi}^{n+1}-e_{\phi}^{n}\|_{\Omega}^2\\
&+\frac{\tau\delta\kappa}{2}\|\nabla_{\Gamma} e_{\psi}^{n+1}\|_{\Gamma}^2+C_4 \tau\| e_{\psi}^{n}\|_{\Gamma}^2+\frac{s_2\tau}{2}\| e_{\psi}^{n+1}-e_{\psi}^{n}\|_{\Gamma}^2,
\end{aligned}
\end{equation}
where $C_i$ ($i=2,3,4$) are constants independent of $\tau$ and we use the estimates for $H^n$ and $\tilde{H}^n$ and the truncation terms $R_{\phi}^{n+1}$, $R_{\psi}^{n+1}$, $R_{\mu}^{n+1}$ and $R_{\Gamma}^{n+1}$. The fact that $R_{\phi}^{n+1}|_{\Gamma}=\gamma(R_{\phi}^{n+1})=R_{\psi}^{n+1}$ is also applied, where $\gamma$ is the trace operator.

We estimate $A_3$ as follows
\begin{equation}\label{A3}
\begin{aligned}
&-\varepsilon\tau(\nabla e_{\mu}^{n+1},\nabla e_{\phi}^{n+1})_{\Omega}-\varepsilon\tau(\nabla_{\Gamma} e_{\Gamma}^{n+1},\nabla_{\Gamma} e_{\psi}^{n+1})_{\Gamma}\\
&\leq 2\varepsilon^2\tau\|\nabla e_{\phi}^{n+1}\|_{\Omega}^2+\frac{\tau}8 \|\nabla e_{\mu}^{n+1}\|_{\Omega}^2+2\varepsilon^2\tau\|\nabla_{\Gamma} e_{\psi}^{n+1}\|_{\Gamma}^2+\frac{\tau}8\|\nabla_{\Gamma} e_{\Gamma}^{n+1}\|_{\Gamma}^2.
\end{aligned}
\end{equation}

For the first term in $A_4$, we have
\begin{equation}\label{A4_1_1}
\begin{aligned}
&-\frac{1}{\varepsilon}(F'(\phi(t^{n}))-F'(\phi^{n}), e_{\phi}^{n+1}-e_{\phi}^{n})_{\Omega}\\
&=-\frac{\tau}{\varepsilon}(H^n, \frac{e_{\phi}^{n+1}-e_{\phi}^{n}}{\tau})_{\Omega}
=-\frac{\tau}{\varepsilon}(H^n, \Delta e_{\mu}^{n+1}+R_{\phi}^{n+1})_{\Omega}\\
&=\frac{\tau}{\varepsilon}(\nabla H^n, \nabla e_{\mu}^{n+1})_{\Omega}-\frac{\tau}{\varepsilon}(H^n, \partial_{\mathbf{n}}e_{\mu}^{n+1})_{\Gamma} -\frac{\tau}{\varepsilon}(H^n, R_{\phi}^{n+1})_{\Omega}\\
&\leq\frac{\tau}{\varepsilon}\|\nabla H^n\|_{\Omega}\|\nabla e_{\mu}^{n+1}\|_{\Omega}
+\frac{\tau}{\varepsilon}\|H^n\|_{\Gamma}\|\partial_{\mathbf{n}}e_{\mu}^{n+1}\|_{\Gamma}+
\frac{\tau}{\varepsilon}\|H^n\|_{\Omega}\|R_{\phi}^{n+1}\|_{\Omega}.
\end{aligned}
\end{equation}
Applying the trace theorem,
$$
\|H^n\|_{\Gamma}=\|\gamma H^n\|_{\Gamma}\lesssim \|H^n\|_{H^1(\Omega)}\lesssim\|H^n\|_{\Omega}+\|\nabla H^n\|_{\Omega}\lesssim \|e_{\phi}^{n}\|_{\Omega}+\|\nabla e_{\phi}^{n}\|_{\Omega}\lesssim \tau,
$$
where we use the assumption that $e_{\phi}^{n}$ satisfies the error estimate \eqref{finalestimate},
we obtain
\begin{equation}\label{A4_1_2}
\begin{aligned}
&-\frac{1}{\varepsilon}(H^n, e_{\phi}^{n+1}-e_{\phi}^{n})_{\Omega}\\
&\leq C_5 \tau (\|e_{\phi}^{n}\|_{\Omega}+\|\nabla e_{\phi}^{n}\|_{\Omega})\|\nabla e_{\mu}^{n+1}\|_{\Omega}+C_6 \tau(\|e_{\phi}^{n}\|_{\Omega}+\|\nabla e_{\phi}^{n}\|_{\Omega})\|\partial_{\mathbf{n}}e_{\mu}^{n+1}\|_{\Gamma}\\
&+C_7 \tau \|e_{\phi}^{n}\|_{\Omega}\|R_{\phi}^{n+1}\|_{\Omega}\\
&\leq C_8 \tau^3+\frac{\tau}{4}\|\nabla e_{\mu}^{n+1}\|_{\Omega}^2+\frac{K\tau}{16}\|\partial_{\mathbf{n}}e_{\mu}^{n+1}\|_{\Gamma}^2.
\end{aligned}
\end{equation}
Here, $C_i$ ($i=5,6,7,8$) are constants independent of $\tau$ and we use the estimates for $H^n$ and $R_{\phi}^{n+1}$.

For the second term in $A_4$, we have
\begin{equation}\label{A4_2_1}
\begin{aligned}
&-(R_{\mu}^{n+1},e_{\phi}^{n+1}-e_{\phi}^{n})_{\Omega}
=-\tau(R_{\mu}^{n+1},\frac{e_{\phi}^{n+1}-e_{\phi}^{n}}{\tau})_{\Omega}\\
&=-\tau(R_{\mu}^{n+1}, \Delta e_{\mu}^{n+1}+R_{\phi}^{n+1})_{\Omega}\\
&=\tau(\nabla R_{\mu}^{n+1}, \nabla e_{\mu}^{n+1})_{\Omega}-\tau (R_{\mu}^{n+1}, \partial_{\mathbf{n}}e_{\mu}^{n+1})_{\Gamma}-\tau(R_{\mu}^{n+1},R_{\phi}^{n+1})_{\Omega}\\
&\leq \tau \|\nabla R_{\mu}^{n+1}\|_{\Omega}\|\nabla e_{\mu}^{n+1}\|_{\Omega}+\tau\|R_{\mu}^{n+1}\|_{\Gamma}\|\partial_{\mathbf{n}}e_{\mu}^{n+1}\|_{\Gamma}+\tau \|R_{\mu}^{n+1}\|_{\Omega}\|R_{\phi}^{n+1}\|_{\Omega}\\
&\leq 2\tau\|\nabla R_{\mu}^{n+1}\|_{\Omega}^2+\frac{\tau}8\|\nabla e_{\mu}^{n+1}\|_{\Omega}^2+\frac{8\tau}{K}\|R_{\mu}^{n+1}\|_{\Gamma}^2+\frac{K\tau}{32}
\|\partial_{\mathbf{n}}e_{\mu}^{n+1}\|_{\Gamma}^2\\
&+\frac{\tau}2\| R_{\mu}^{n+1}\|_{\Omega}^2+\frac{\tau}2\| R_{\phi}^{n+1}\|_{\Omega}^2\\
&\leq C_9 \tau^3+\frac{\tau}8\|\nabla e_{\mu}^{n+1}\|_{\Omega}^2+\frac{K\tau}{32}
\|\partial_{\mathbf{n}}e_{\mu}^{n+1}\|_{\Gamma}^2,
\end{aligned}
\end{equation}
where $C_9$ is a constant independent of $\tau$. And here, we apply the trace theorem that
$$
\|R_{\mu}^{n+1}\|_{\Gamma}=\|\gamma R_{\mu}^{n+1}\|_{\Gamma}\lesssim \|R_{\mu}^{n+1}\|_{H^1(\Omega)}\lesssim\|R_{\mu}^{n+1}\|_{\Omega}+\|\nabla R_{\mu}^{n+1}\|_{\Omega}\lesssim \tau,
$$
and use the estimates for $R_{\mu}^{n+1}$ and $R_{\phi}^{n+1}$.

Similarly, for the first term in $A_5$, we have
\begin{equation}\label{A5_1}
\begin{aligned}
&-\frac{1}{\delta}(G'(\psi(t^{n}))-G'(\psi^{n}), e_{\psi}^{n+1}-e_{\psi}^{n})_{\Gamma}\\
&=-\frac{\tau}{\delta}(\tilde{H}^n, \frac{e_{\psi}^{n+1}-e_{\psi}^{n}}{\tau})_{\Gamma}
=-\frac{\tau}{\delta}(\tilde{H}^n, \Delta_{\Gamma} e_{\Gamma}^{n+1}-\partial_{\mathbf{n}}e_{\mu}^{n+1}+R_{\psi}^{n+1})_{\Gamma}\\
&=\frac{\tau}{\delta}(\nabla_{\Gamma}\tilde{H}^n,\nabla_{\Gamma} e_{\Gamma}^{n+1})_{\Gamma}+\frac{\tau}{\delta}(\tilde{H}^n, \partial_{\mathbf{n}}e_{\mu}^{n+1})_{\Gamma}-\frac{\tau}{\delta}(\tilde{H}^n,R_{\psi}^{n+1})_{\Gamma}\\
&\leq \frac{\tau}{\delta} \|\nabla_{\Gamma}\tilde{H}^n\|_{\Gamma}\|\nabla_{\Gamma}e_{\Gamma}^{n+1}\|_{\Gamma}+
\frac{\tau}{\delta}\| \tilde{H}^n\|_{\Gamma}\|\partial_{\mathbf{n}}e_{\mu}^{n+1}\|_{\Gamma}+
\frac{\tau}{\delta}\| \tilde{H}^n\|_{\Gamma}\|R_{\psi}^{n+1}\|_{\Gamma}\\
&\leq C_{10}\tau( \|e_{\psi}^{n}\|_{\Gamma}+\|\nabla_{\Gamma} e_{\psi}^{n}\|_{\Gamma})\|\nabla_{\Gamma}e_{\Gamma}^{n+1}\|_{\Gamma}+C_{11}\tau\|e_{\psi}^{n}\|_{\Gamma}
\|\partial_{\mathbf{n}}e_{\mu}^{n+1}\|_{\Gamma}
+C_{12}\tau\|e_{\psi}^{n}\|_{\Gamma}\|R_{\psi}^{n+1}\|_{\Gamma}\\
&\leq C_{13} \tau^3 +\frac{\tau}{4}\|\nabla_{\Gamma}e_{\Gamma}^{n+1}\|_{\Gamma}^2+\frac{K\tau}{32}
\|\partial_{\mathbf{n}}e_{\mu}^{n+1}\|_{\Gamma}^2,
\end{aligned}
\end{equation}
where $C_i$ ($i=10,11,12,13$) are constants independent of $\tau$.
Here, we use the assumption that $e_{\psi}^{n}$ satisfies the error estimate \eqref{finalestimate} and use the
estimate for $R_{\psi}^{n+1}$.

For the second term in $A_5$, we have
\begin{equation}\label{A5_2}
\begin{aligned}
&-(R_{\Gamma}^{n+1},e_{\psi}^{n+1}-e_{\psi}^{n})_{\Gamma}
=-\tau(R_{\Gamma}^{n+1},\frac{e_{\psi}^{n+1}-e_{\psi}^{n}}{\tau})_{\Gamma}\\
&=-\tau(R_{\Gamma}^{n+1}, \Delta_{\Gamma} e_{\Gamma}^{n+1}-\partial_{\mathbf{n}}e_{\mu}^{n+1}+R_{\psi}^{n+1})_{\Gamma}\\
&=\tau(\nabla_{\Gamma} R_{\Gamma}^{n+1}, \nabla_{\Gamma} e_{\Gamma}^{n+1})_{\Gamma}+\tau(R_{\Gamma}^{n+1}, \partial_{\mathbf{n}}e_{\mu}^{n+1})_{\Gamma}-\tau(R_{\Gamma}^{n+1},R_{\psi}^{n+1})_{\Gamma}\\
&\leq 2\tau\|\nabla_{\Gamma} R_{\Gamma}^{n+1}\|_{\Gamma}^2+\frac{\tau}8\|\nabla_{\Gamma} e_{\Gamma}^{n+1}\|_{\Gamma}^2+\frac{\tau}2\| R_{\Gamma}^{n+1}\|_{\Gamma}^2+\frac{\tau}2\| R_{\psi}^{n+1}\|_{\Gamma}^2\\
&+\frac{8\tau}{K}\| R_{\Gamma}^{n+1}\|_{\Gamma}^2+\frac{\tau K}{32}\|\partial_{\mathbf{n}}e_{\mu}^{n+1}\|_{\Gamma}^2\\
&\leq C_{14} \tau^3+\frac{\tau}8\|\nabla_{\Gamma} e_{\Gamma}^{n+1}\|_{\Gamma}^2+\frac{\tau K}{32}\|\partial_{\mathbf{n}}e_{\mu}^{n+1}\|_{\Gamma}^2,
\end{aligned}
\end{equation}
where $C_{14}$ is a constant independent of $\tau$ and we use the estimates for $R_{\psi}^{n+1}$ and $R_{\Gamma}^{n+1}$.

Combine \eqref{errorzong} with \eqref{A1}, \eqref{A2}, \eqref{A3}, \eqref{A4_1_2}, \eqref{A4_2_1}, \eqref{A5_1} and \eqref{A5_2}, we derive
\begin{equation}\label{errorzong2}
\begin{aligned}
&\frac{\varepsilon}2(\| \nabla e_{\phi}^{n+1}\|_{\Omega}^2-\|\nabla e_{\phi}^{n}\|_{\Omega}^2+\| \nabla e_{\phi}^{n+1}-\nabla e_{\phi}^{n}\|_{\Omega}^2)
+\frac{\varepsilon}2(\| e_{\phi}^{n+1}\|_{\Omega}^2-\| e_{\phi}^{n}\|_{\Omega}^2+\| e_{\phi}^{n+1}- e_{\phi}^{n}\|_{\Omega}^2)\\
&+\frac{\delta\kappa}2(\| \nabla_{\Gamma} e_{\psi}^{n+1}\|_{\Gamma}^2-\|\nabla_{\Gamma} e_{\psi}^{n}\|_{\Gamma}^2+\| \nabla_{\Gamma} e_{\psi}^{n+1}-\nabla_{\Gamma} e_{\psi}^{n}\|_{\Gamma}^2)\\
&+\frac{\varepsilon}2(\| e_{\psi}^{n+1}\|_{\Gamma}^2-\| e_{\psi}^{n}\|_{\Gamma}^2+\| e_{\psi}^{n+1}- e_{\psi}^{n}\|_{\Gamma}^2)
+s_1\|e_{\phi}^{n+1}-e_{\phi}^{n}\|_{\Omega}^2+s_2\|e_{\psi}^{n+1}-e_{\psi}^{n}\|_{\Gamma}^2\\
&+\frac{\tau}{2}\| \nabla e_{\mu}^{n+1}\|_{\Omega}^2
+\frac{\tau}{2} \| \nabla_{\Gamma} e_{\Gamma}^{n+1}\|_{\Gamma}^2+\frac{27K \tau}{32}\|\partial_{\mathbf{n}}e_{\mu}^{n+1}\|_{\Gamma}^2\\
&\leq C_{15} \tau^3+ C_{16}\tau(\| \nabla e_{\phi}^{n+1}\|_{\Omega}^2+\|e_{\phi}^{n+1}\|_{\Omega}^2+\| e_{\phi}^{n+1}- e_{\phi}^{n}\|_{\Omega}^2\\
&+\| \nabla_{\Gamma} e_{\psi}^{n+1}\|_{\Gamma}^2+\|e_{\psi}^{n+1}\|_{\Gamma}^2+\| e_{\psi}^{n+1}- e_{\psi}^{n}\|_{\Gamma}^2).
\end{aligned}
\end{equation}
Here, $C_{15}$ is a constant independent of $\tau$ and the constant $C_{16}=\max\{\varepsilon/2+2\varepsilon^2, \delta\kappa/2+2\varepsilon^2, s_1/2, s_2/2\}$, which is also independent of $\tau$.

Summing \eqref{errorzong2} together for $n=0$ to $m$, we derive
\begin{equation}\label{sum}
\begin{aligned}
&\frac{\varepsilon}2\| \nabla e_{\phi}^{m+1}\|_{\Omega}^2+\frac{\varepsilon}2\| e_{\phi}^{m+1}\|_{\Omega}^2+\frac{\delta\kappa}2\| \nabla_{\Gamma} e_{\psi}^{m+1}\|_{\Gamma}^2+\frac{\varepsilon}2\| e_{\psi}^{m+1}\|_{\Gamma}^2\\
&+\sum_{n=0}^m \bigg{(} \frac{\varepsilon}2\| \nabla e_{\phi}^{n+1}-\nabla e_{\phi}^{n}\|_{\Omega}^2 +(s_1+\frac{\varepsilon}2)\| e_{\phi}^{n+1}- e_{\phi}^{n}\|_{\Omega}^2\\
&+\frac{\delta\kappa}2 \| \nabla_{\Gamma} e_{\psi}^{n+1}-\nabla_{\Gamma} e_{\psi}^{n}\|_{\Gamma}^2 +(s_2+\frac{\varepsilon}2)\| e_{\psi}^{n+1}- e_{\psi}^{n}\|_{\Gamma}^2\\
&+\frac{\tau}{2}\|\nabla e_{\mu}^{n+1}\|_{\Omega}^2+\frac{\tau}2\|\nabla_{\Gamma} e_{\Gamma}^{n+1}\|_{\Gamma}^2  +\frac{27K\tau}{32}\|\partial_{\mathbf{n}}e_{\mu}^{n+1}\|_{\Gamma}^2 \bigg{)}\\
&\leq C_{15} (m+1)\tau^3+C_{16} \tau\sum_{n=0}^m\bigg{(}\| \nabla e_{\phi}^{n+1}\|_{\Omega}^2+\|e_{\phi}^{n+1}\|_{\Omega}^2+\| e_{\phi}^{n+1}- e_{\phi}^{n}\|_{\Omega}^2\\
&+\|\nabla_{\Gamma} e_{\psi}^{n+1}\|_{\Gamma}^2+\|e_{\psi}^{n+1}\|_{\Gamma}^2+\| e_{\psi}^{n+1}- e_{\psi}^{n}\|_{\Gamma}^2\bigg{)},
\end{aligned}
\end{equation}

Denote
\begin{equation}\label{zetadef}
\omega = \min\{ \frac{\varepsilon}2, \frac{\delta\kappa}2, (s_1+\frac{\varepsilon}2), (s_2+\frac{\varepsilon}2)   \},
\end{equation}

\begin{equation}\label{Idef}
\begin{aligned}
I_m&=\frac{\varepsilon}2\| \nabla e_{\phi}^{m+1}\|_{\Omega}^2+\frac{\varepsilon}2\| e_{\phi}^{m+1}\|_{\Omega}^2+\frac{\delta\kappa}2\| \nabla_{\Gamma} e_{\psi}^{m+1}\|_{\Gamma}^2+\frac{\varepsilon}2\| e_{\psi}^{m+1}\|_{\Gamma}^2\\
&+(s_1+\frac{\varepsilon}2)\| e_{\phi}^{m+1}- e_{\phi}^{m}\|_{\Omega}^2+(s_2+\frac{\varepsilon}2)\| e_{\psi}^{m+1}- e_{\psi}^{m}\|_{\Gamma}^2,
\end{aligned}
\end{equation}
and
\begin{equation}\label{Sdef}
\begin{aligned}
S_m&=\sum_{n=0}^m \bigg{(} \frac{\varepsilon}2\| \nabla e_{\phi}^{n+1}-\nabla e_{\phi}^{n}\|_{\Omega}^2+\frac{\delta\kappa}2 \| \nabla_{\Gamma} e_{\psi}^{n+1}-\nabla_{\Gamma} e_{\psi}^{n}\|_{\Gamma}^2\\
&+ \frac{\tau}{2}\|\nabla e_{\mu}^{n+1}\|_{\Omega}^2+\frac{\tau}2\|\nabla_{\Gamma} e_{\Gamma}^{n+1}\|_{\Gamma}^2  +\frac{27K\tau}{32}\|\partial_{\mathbf{n}}e_{\mu}^{n+1}\|_{\Gamma}^2\bigg{)}.
\end{aligned}
\end{equation}

Then we have
\begin{equation}\label{gronwall}
\begin{aligned}
I_m+S_m&\leq C_{15} T \tau^2+C_{16}\tau\sum_{n=0}^m \bigg{(}\| \nabla e_{\phi}^{n+1}\|_{\Omega}^2+\|e_{\phi}^{n+1}\|_{\Omega}^2+\| e_{\phi}^{n+1}- e_{\phi}^{n}\|_{\Omega}^2\\
&+\|\nabla_{\Gamma} e_{\psi}^{n+1}\|_{\Gamma}^2+\|e_{\psi}^{n+1}\|_{\Gamma}^2+\| e_{\psi}^{n+1}- e_{\psi}^{n}\|_{\Gamma}^2\bigg{)}\\
& = C_{15} T \tau^2+\frac{C_{16}}{\omega} \tau \sum_{n=0}^m \omega\bigg{(}\| \nabla e_{\phi}^{n+1}\|_{\Omega}^2+\|e_{\phi}^{n+1}\|_{\Omega}^2+\| e_{\phi}^{n+1}- e_{\phi}^{n}\|_{\Omega}^2\\
&+\|\nabla_{\Gamma} e_{\psi}^{n+1}\|_{\Gamma}^2+\|e_{\psi}^{n+1}\|_{\Gamma}^2+\| e_{\psi}^{n+1}- e_{\psi}^{n}\|_{\Gamma}^2\bigg{)}\\
&\leq C_{15} T \tau^2+C_{17}\tau\sum_{n=0}^m I_n.
\end{aligned}
\end{equation}
where $C_{17}=C_{16}/\omega$ is a constant independent of $\tau$.
According to the discrete Gronwall's inequality, there exists some constants $\tilde{c}_0$, which is independent of $\tau$, and $\tau_0 = 1/C_{17}$, such that, when $\tau < \tau_0$,
\begin{equation}\label{gronwall2}
I_m+S_m\leq\tilde{c}_0\tau^2.
\end{equation}
And thus the error estimate \eqref{finalestimate} holds for $e_{\phi}^{m+1}$ and $e_{\psi}^{m+1}$.

\end{proof}

\begin{remark}
If we set the parameters as
$$
\varepsilon = \delta = 0.02, \quad \kappa = 1, \quad s_1 = s_2 = 50,
$$
then $C_{16} = s_1/2=25$, $\omega = \varepsilon/2 = 0.01$, we obtain $C_{17} = 2500$ and $\tau_0 = 4 \times 10^{-4}$. Namely, when $\tau < 4 \times 10^{-4}$, the numerical solutions satisfy the error estimates \eqref{finalestimate}.
Hence, the error estimates are applicable for time increments that can be used in practical simulations.
\end{remark}

\section{Numerical simulations\label{s5}}

In this section, we present numerical experiments of the KLLM model (Eq. \eqref{CHK}) by implementing
the developed scheme \eqref{SIscheme1}-\eqref{SIscheme6}. The numerical examples include the simulations with different energy potentials,
the comparison with the numerical results in \cite{knopf2020}, accuracy tests with respect to the time step size, and the convergence of discrete solutions for $K\rightarrow \infty$ and $K\rightarrow 0$.
%

In this section, we present the numerical simulations in two dimensions.
For the spatial operators, we use the second-order central finite difference
method to discretize them over a uniform spatial grid. After the spacial discretization, the generalized minimum residual method is used as the linear solver in this section.

\begin{remark}
In this section, we conduct experiments on the rectangular domain. For more regular domains, the strategy is similar. The finite difference method for the bulk descretization is the same. For the boundary conditions, we need to choose a suitable coordinate for the boundary $\Gamma$ so that we can get the specific representation of the operator $\Delta_{\Gamma}$ and $\mathbf{n}$. And then, we can use the finite difference method for the spatial discretization on the boundary.
Numerical experiments on other two-dimensional domains will be our future work.
\end{remark}

\subsection{Case with Flory-Huggins potential}
In this section, we consider the numerical approximations for the KLLM model with the logarithmic Flory-Huggins potential. Namely, for the bulk and surface potential, we consider the logarithmic Flory-Huggins potential as follows,
\begin{equation}\label{FL1}
F(\phi)=\phi \ln \phi + (1-\phi)\ln(1-\phi) + \theta\phi(1-\phi),
\end{equation}
\begin{equation}\label{FL2}
G(\psi)=\psi \ln \psi + (1-\psi)\ln(1-\psi) + \theta\psi(1-\psi),
\end{equation}
where the constant $\theta>1$.

\begin{remark}
The Cahn-Hilliard type equation with the Flory-Huggins potential is widely used to describe the spinodal decomposition and coarsening phenomena of binary mixtures. In this case, instead of treating $\phi$ and $\psi$ as the order parameters, $\phi$ and $\psi$ denote the mass concentration of one component in the bulk and on the boundary respectively. And the mass concentration of the other component in the bulk and on the boundary are denoted by $1-\phi$ and $1-\psi$ respectively. Hence, the corresponding physical relevant interval is $(0, 1)$.

\end{remark}

Following the work in \cite{flory}, we consider the regularized logarithmic potential in this section. Precisely, for $0 < \zeta \ll 1$, the regularized potential is
\begin{equation}\label{regularFL1}
\hat{F}(\phi)=\left\{\begin{aligned}
&\phi\ln\phi+\frac{(1-\phi)^2}{2\zeta}+(1-\phi)\ln \zeta-\frac{\zeta}2+\theta\phi(1-\phi) \qquad \phi>1-\zeta,\\
&\phi\ln\phi+(1-\phi)\ln (1-\phi)+\theta\phi(1-\phi) \qquad \zeta\leq\phi\leq1-\zeta,\\
&(1-\phi)\ln (1-\phi)+\frac{\phi^2}{2\zeta}+\phi\ln \zeta-\frac{\zeta}2+\theta\phi(1-\phi) \qquad \phi<\zeta.
\end{aligned}
\right.
\end{equation}
\begin{equation}\label{regularFL2}
\hat{G}(\psi)=\left\{\begin{aligned}
&\psi\ln\psi+\frac{(1-\psi)^2}{2\zeta}+(1-\psi)\ln \zeta-\frac{\zeta}2+\theta\psi(1-\psi) \qquad \psi>1-\zeta,\\
&\psi\ln\psi+(1-\psi)\ln (1-\psi)+\theta\psi(1-\psi) \qquad \zeta\leq\psi\leq1-\zeta,\\
&(1-\psi)\ln (1-\psi)+\frac{\psi^2}{2\zeta}+\psi\ln \zeta-\frac{\zeta}2+\theta\psi(1-\psi) \qquad \psi<\zeta.
\end{aligned}
\right.
\end{equation}
The advantages of using the regularized potential is that the domains for the regularized potential $\hat{F}$ and $\hat{G}$ are $\mathbb{R}$, and thus, there's no need to worry about the overflow which could be caused by any small fluctuation near the domain boundary $(0, 1)$ of the numerical solution. Obviously, the second derivatives of $\hat{F}$ and $\hat{G}$ with respect to $\phi$ and $\psi$ are bounded, respectively.

\begin{figure}
\centering
\includegraphics[scale=0.25]{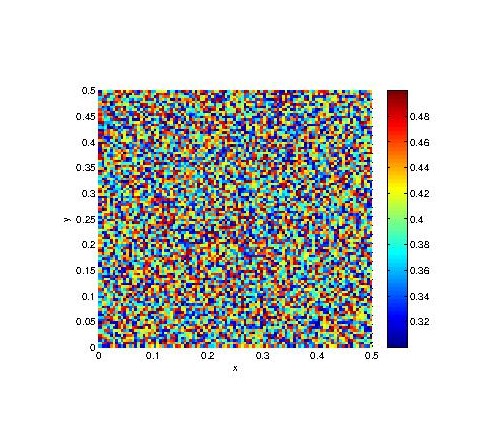}
\label{diffkinitial}
\caption{The initial data of the Section 5.1 and 5.2.}
\end{figure}

We conduct numerical simulations from $t=0$ to $T=0.1$ on the domain $\Omega=(0,0.5)^2\subset\mathbb{R}^2$ with the spatial step size $h=0.005$ and the time step $\tau=1e-4$. The initial data is set as random values between 0.4 and 0.6, as shown in Fig. 1. The parameters are set as
$$
\varepsilon=\delta=0.05, \  \kappa=1, \  s_1=s_2=500, \ \zeta=0.005, \ \theta=2.5.
$$
The numerical results of the KLLM model at $t=0.005$, $t=0.01$, $t=0.02$ and $t=0.05$ for
different $K$ ($K = 0.1,1,10$) are plotted in Fig. 2. Note that the phases are separated in all the cases and it shows different phenomenon on the boundary for different $K$. The time evolution of the energy and mass is plotted in Fig. 3 and 4 respectively. It reveals that the numerical scheme is energy stable. And the bulk and surface mass change with respect to time but the sum of them, namely, the total mass, is conserved for different $K$, which is consistent with the analysis in Section 3.

\begin{figure}
\centering
\includegraphics[scale=0.19]{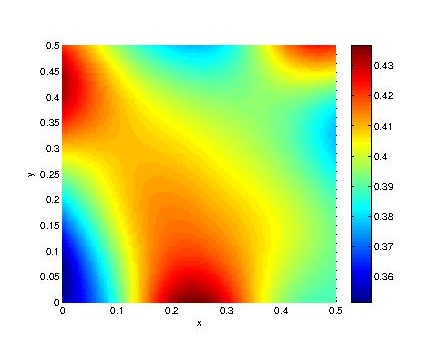}
\includegraphics[scale=0.19]{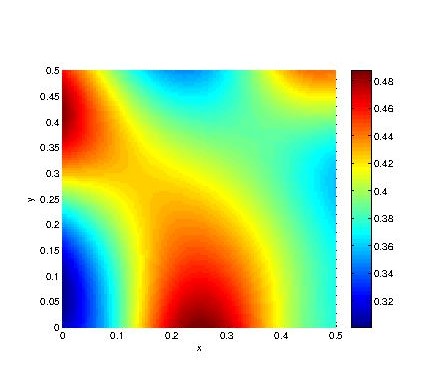}
\includegraphics[scale=0.19]{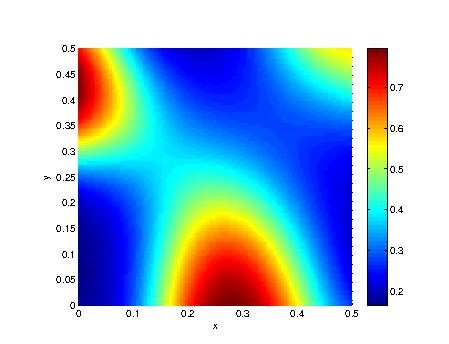}
\includegraphics[scale=0.19]{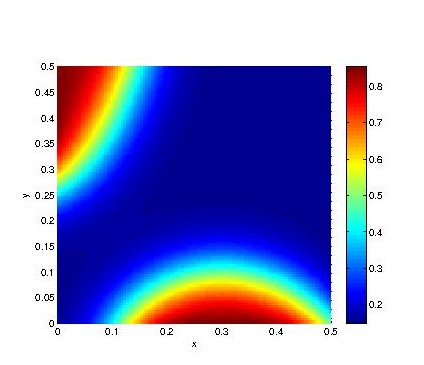}\\
\includegraphics[scale=0.19]{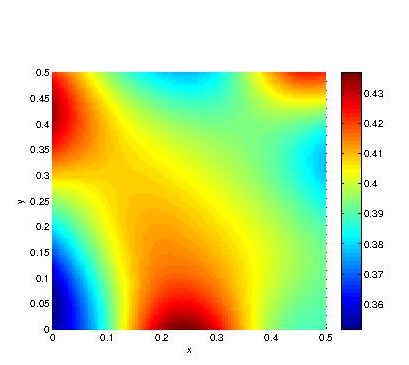}
\includegraphics[scale=0.19]{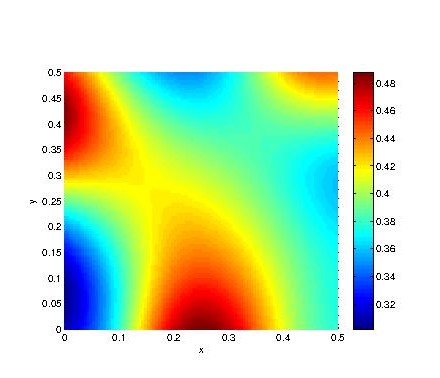}
\includegraphics[scale=0.19]{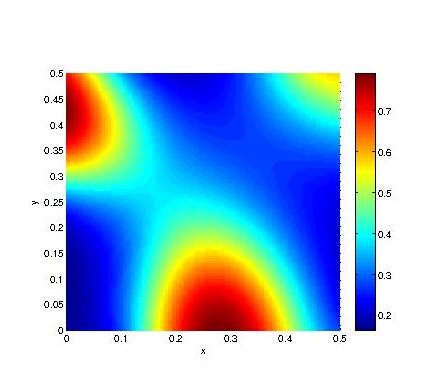}
\includegraphics[scale=0.19]{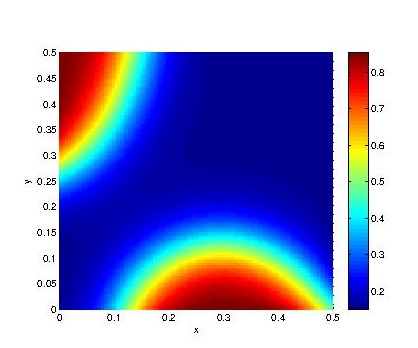}\\
\includegraphics[scale=0.19]{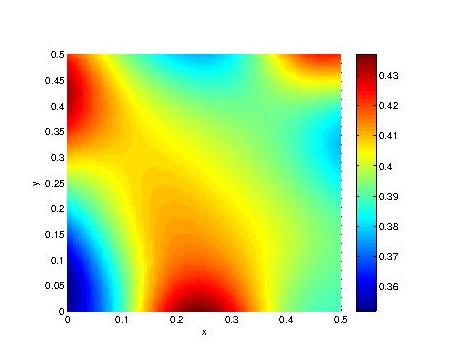}
\includegraphics[scale=0.19]{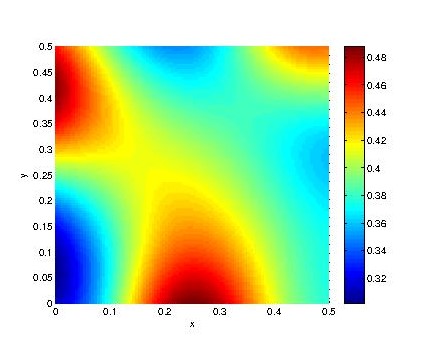}
\includegraphics[scale=0.19]{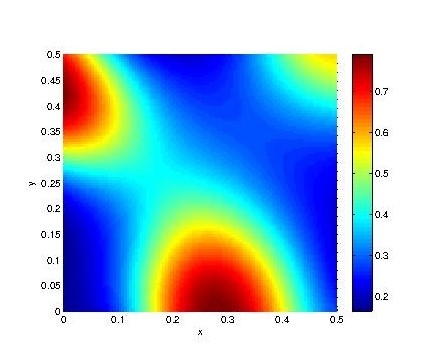}
\includegraphics[scale=0.19]{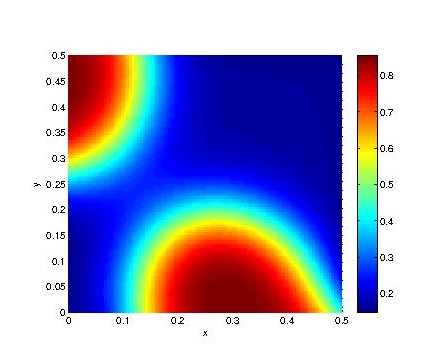}\\
\label{diffkevolutionflory}
\caption{Numerical results of the KLLM model with the Flory-Huggins potential at $t=0.005$, $t=0.01$, $t=0.02$ and $t=0.05$. From top to bottom: $K=0.1$, $K=1$ and $K=10$.}
\end{figure}
\begin{figure}
\centering
\includegraphics[scale=0.35]{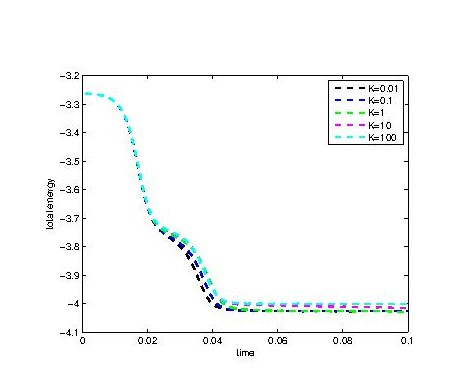}
\label{energyplotflory}
\caption{Energy evolution of the KLLM model with the initial data shown in Fig. 1 with the Flory-Huggins potential.}
\end{figure}
\begin{figure}
\centering
\includegraphics[scale=0.25]{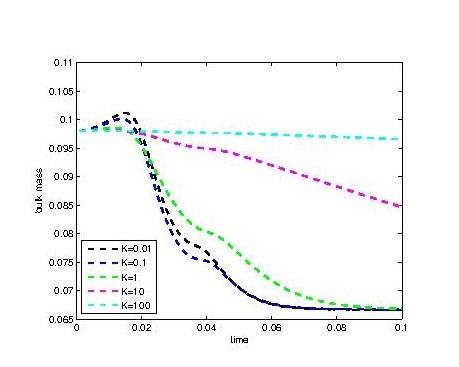}
\includegraphics[scale=0.25]{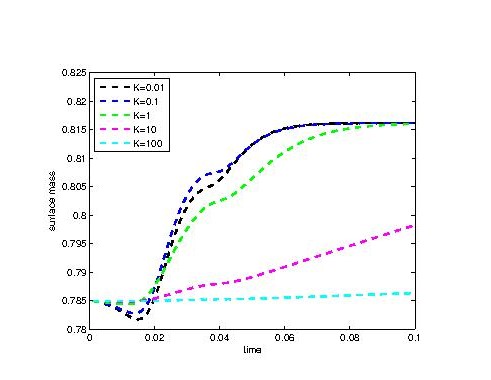}
\includegraphics[scale=0.25]{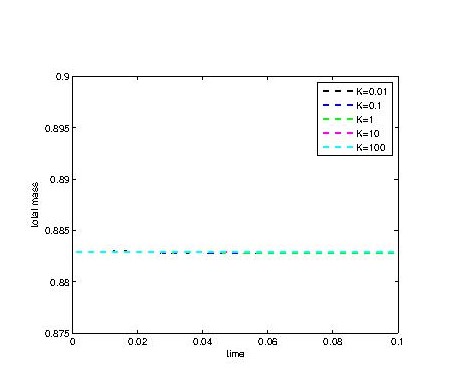}
\label{massplotflory}
\caption{Mass evolution of the KLLM model with the initial data shown in Fig. 1 and the Flory-Huggins potential: the bulk mass evolution (left), the surface mass evolution (middle) and the total mass (right).}
\end{figure}

The minimal and maximal occurring values of $\phi$ and $\psi$ for different $K$ are plotted in Figs. 5-6. We can conclude that in this case, the values of $\phi$ and $\psi$ lie in the physical relevant interval $(0,1)$, indicating the practicality of the proposed scheme.
\begin{figure}
\centering
\includegraphics[scale=0.35]{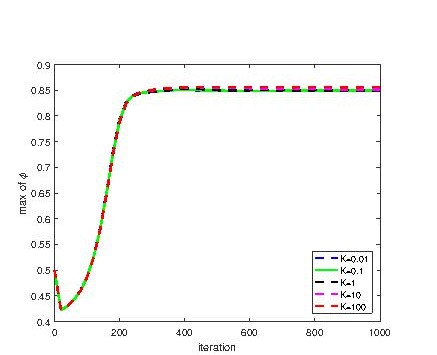}
\includegraphics[scale=0.35]{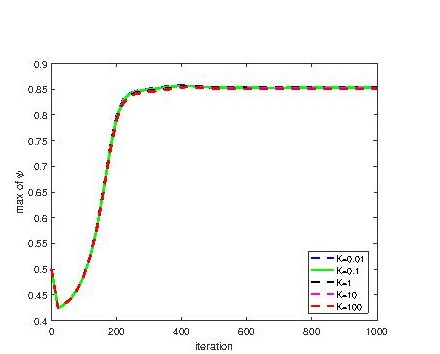}
\label{maxflory}
\caption{ Maximum values of $\phi$ (left) and $\psi$ (right) with respect to iterations with the initial data shown in Fig. 1 and the Flory-Huggins potential.}
\end{figure}


\begin{figure}
\centering
\includegraphics[scale=0.35]{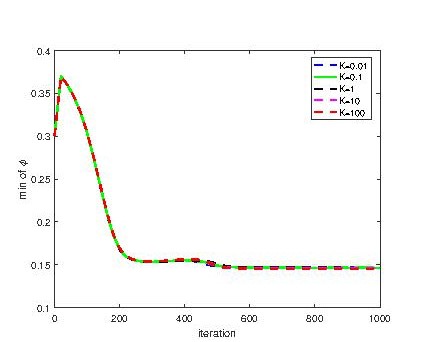}
\includegraphics[scale=0.35]{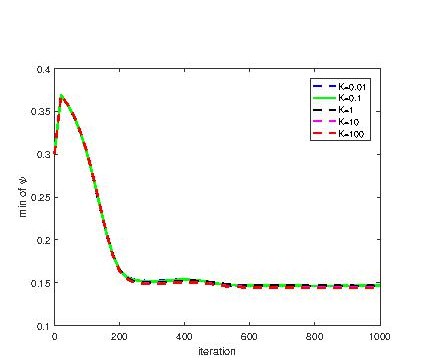}
\label{minflory}
\caption{ Minimum values of $\phi$ (left) and $\psi$ (right) with respect to iterations  with the initial data shown in Fig. 1 and the Flory-Huggins potential.}
\end{figure}


\subsection{Case with the double-well potential}
In this section, we consider the case with the modified double-well potential shown in Eq. \eqref{modifyFG}. Precisely, we choose $F(\phi)$ and $G(\psi)$ as follows:
\begin{equation}
F(\phi)=\left\{\begin{aligned}
&(\phi-1)^2 \qquad \phi>1,\\
&\frac{1}4(\phi^2-1)^2 \quad -1\leq\phi\leq1,\\
&(\phi+1)^2 \qquad \phi<-1,
\end{aligned}
\right.
\quad
G(\psi)=\left\{\begin{aligned}
&(\psi-1)^2 \qquad \psi>1,\\
&\frac{1}4(\psi^2-1)^2 \quad -1\leq\psi\leq1,\\
&(\psi+1)^2 \qquad \psi<-1.
\end{aligned}
\right.
\end{equation}
Obviously, Remark 5 shows that the second derivative of $F$ with respect to $\phi$ and the second derivative of $G$ with respect to $\psi$, namely, $F''$ and $G''$, are Lipschitz and bounded.

\begin{remark}
For the case with the modified double-well potential, in order to describe the binary alloys, $\phi$ and $\psi$ are treated as the order parameters, denoting the difference of two local relative concentrations. The regions with $\phi = \pm 1$ (or $\psi = \pm 1$) in the domain $\Omega$ (or on the boundary $\Gamma$) represent the pure phases of the materials. Hence, the corresponding physical relevant interval is $[-1, 1]$.
\end{remark}

We conduct numerical simulations from $t=0$ to $T=1$ on the domain $\Omega=(0,0.5)^2\subset\mathbb{R}^2$ with the spatial step size $h=0.005$ and the time step $\tau=1e-4$. The initial data is set as random values between 0.4 and 0.6, as shown in Fig. 1. And the parameters are set as
$$
\varepsilon=\delta=0.02, \  \kappa=1, \  s_1=s_2=50.
$$

The numerical solutions of the KLLM model at time $t=10^{-3}$, $5\times 10^{-3}$, $10^{-2}$ and $5\times 10^{-2}$ for different $K$ ($K=0.01, 1, 100$) are plotted in Fig. 7. It shows the separation of phases and there exists interesting phenomenon on the boundary for different $K$.
The energy evolution from $t=0$ to $T=1$ and its local magnification from $t=0$ to $t=4\times 10^{-3}$ are plotted in Fig. 8, revealing the energy stability.
From the magnification, it reveals that at the beginning, the energy decreases faster for smaller $K$. Namely, the energy minimization benefits from low values of $K$. We can obtain the same observation from Fig. 3 and Fig. 16.
This phenomena may because high values of $K$ inhibit but low values of $K$ promote the mass transfer between $\Omega$ and $\Gamma$.
Moreover, for different $K$, the energy decrease in shape of steps, following different paths, and finally approaches approximately the same value.
The configurations of the droplet near the "steps"(at the time $t=0.1,\ 0.2,\ 0.4$) and at  the final equilibrium state (at the time $t=1$) are shown in Fig. 9, from which we may conclude that the different paths, which the decrease of the energy follows for different $K$, are related to the numbers and configurations of the droplets with values around -1. And the configurations of the droplets at the equilibrium state for different $K$ are similar.

The time evolutions of masses (the bulk mass $\int_{\Omega}\phi dx$ and the surface mass $\int_{\Gamma}\psi dS$) are plotted in Fig. 10. Note that the bulk and surface mass are not conserved respectively, but time evolution of the total mass (sum of the bulk and the surface mass) shows the conservation for different $K$, indicating the consistence between the numerical results and the analysis in Section 3.

\begin{figure}
\centering
\includegraphics[scale=0.19]{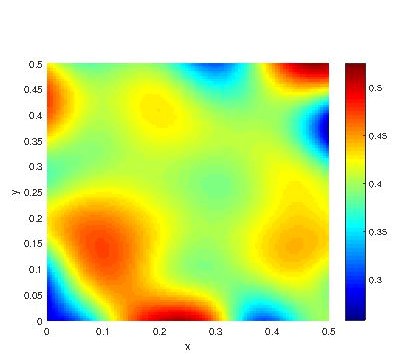}
\includegraphics[scale=0.19]{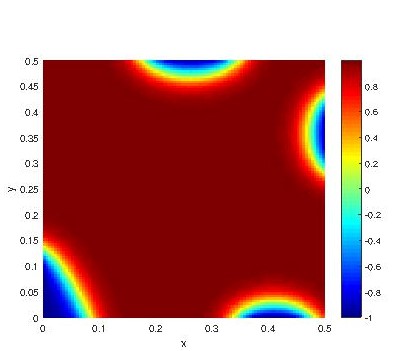}
\includegraphics[scale=0.19]{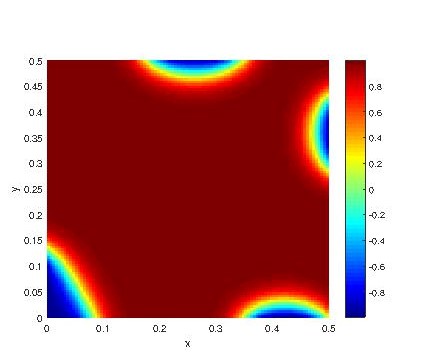}
\includegraphics[scale=0.19]{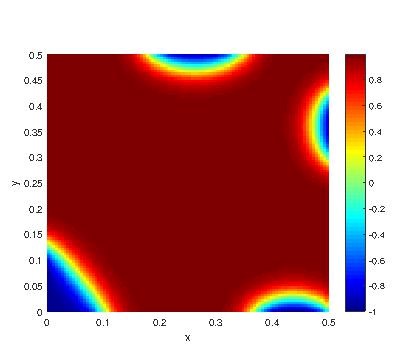}\\
\includegraphics[scale=0.19]{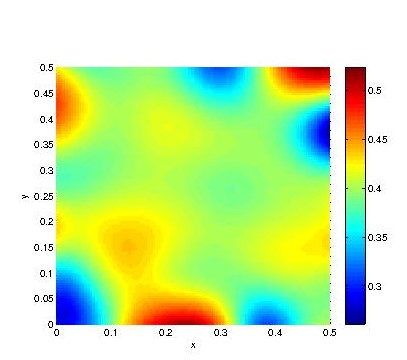}
\includegraphics[scale=0.19]{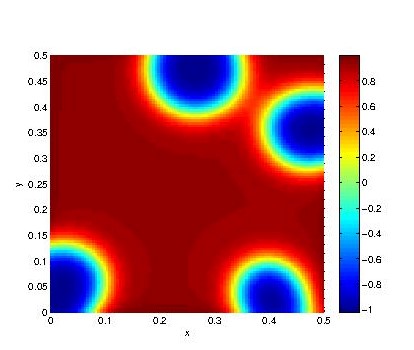}
\includegraphics[scale=0.19]{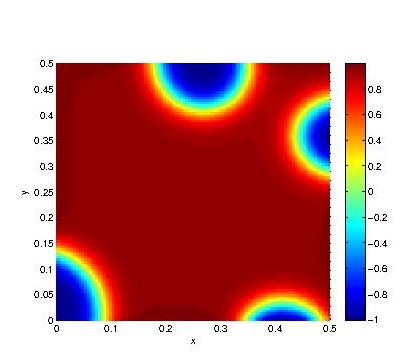}
\includegraphics[scale=0.19]{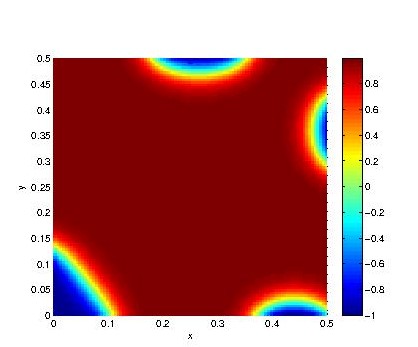}\\
\includegraphics[scale=0.19]{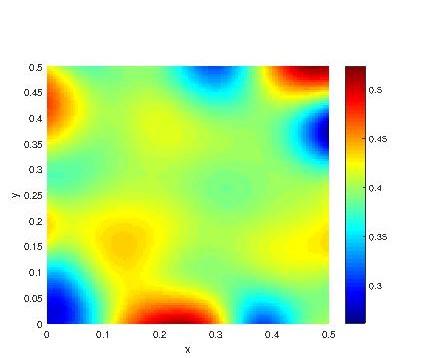}
\includegraphics[scale=0.19]{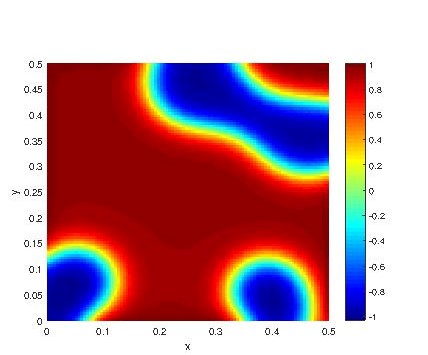}
\includegraphics[scale=0.19]{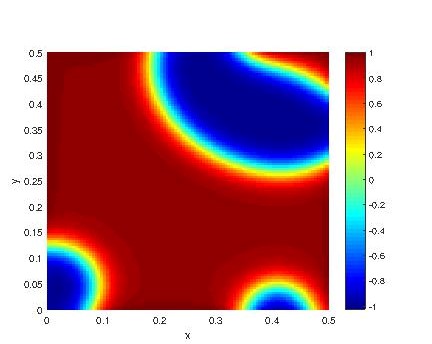}
\includegraphics[scale=0.19]{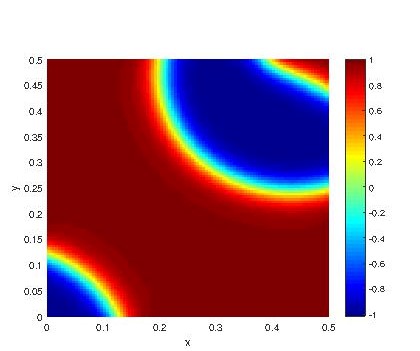}\\
\label{diffkdoublewell}
\caption{ Numerical results of the KLLM model with the initial data shown in
Fig. 1 and the modified double-well potential at time $t=10^{-3}$, $5\times 10^{-3}$, $10^{-2}$ and $5\times 10^{-2}$. From top to bottom: $K=0.01$, $K=1$ and $K=100$.  }
\end{figure}

\begin{figure}
\centering
\includegraphics[scale=0.35]{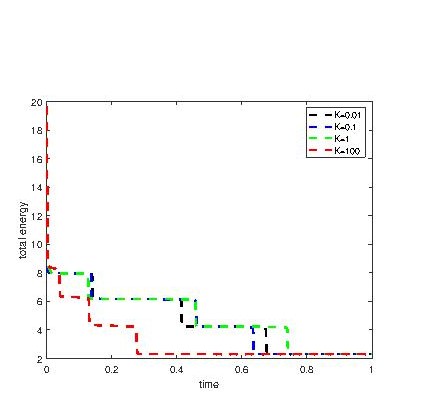}
\includegraphics[scale=0.35]{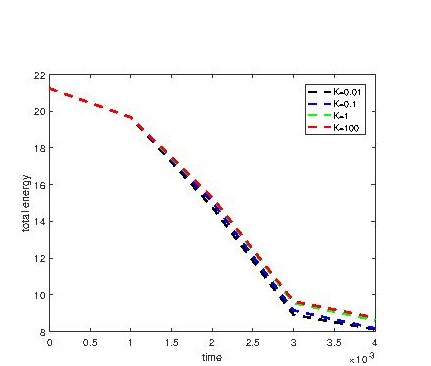}
\label{energyplot1}
\caption{Energy evolution of the KLLM model with the initial data shown in Fig. 1 and the modified double-well potential (left) and the corresponding local magnification from $t=0$ to $t=4\times 10^{-3}$ (right).}
\end{figure}

\begin{figure}
\centering
\includegraphics[scale=0.19]{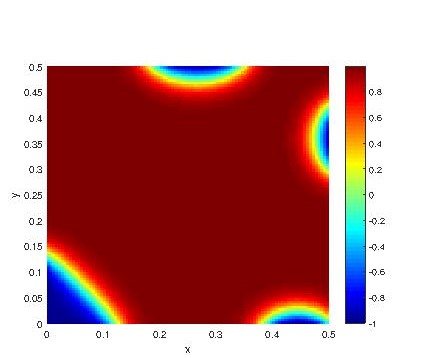}
\includegraphics[scale=0.19]{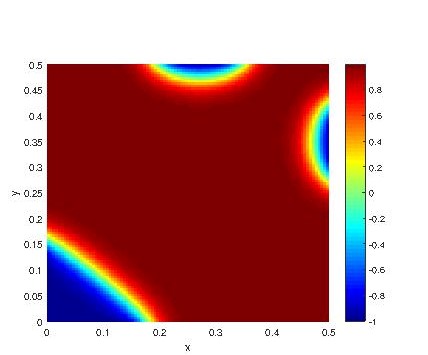}
\includegraphics[scale=0.19]{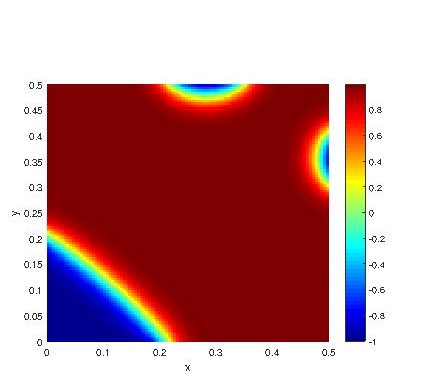}
\includegraphics[scale=0.19]{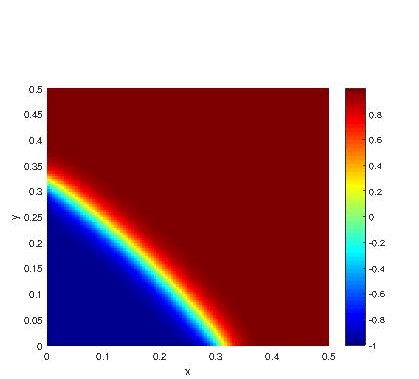}\\
\includegraphics[scale=0.19]{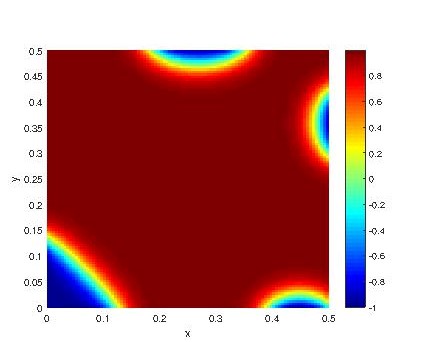}
\includegraphics[scale=0.19]{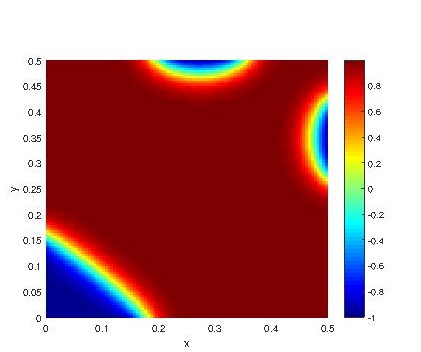}
\includegraphics[scale=0.19]{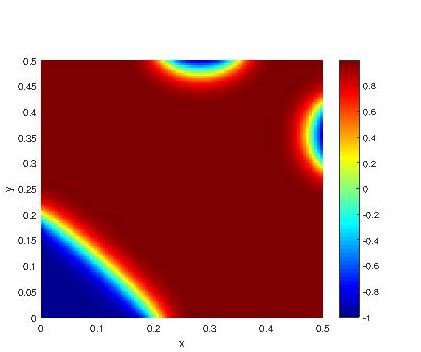}
\includegraphics[scale=0.19]{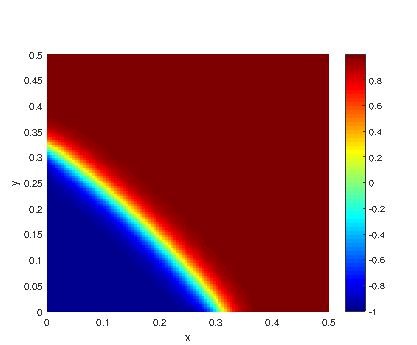}\\
\includegraphics[scale=0.19]{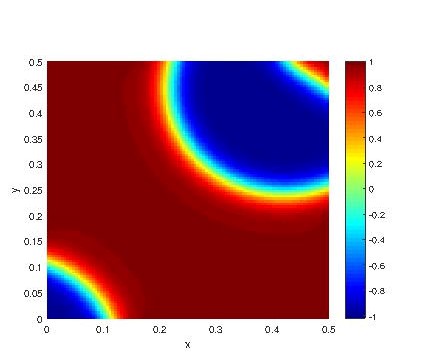}
\includegraphics[scale=0.19]{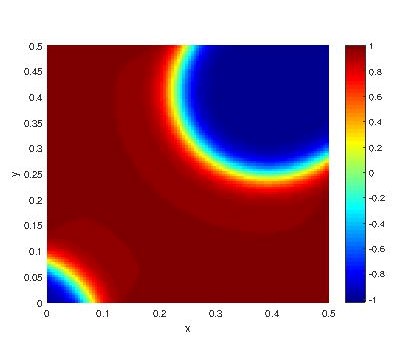}
\includegraphics[scale=0.19]{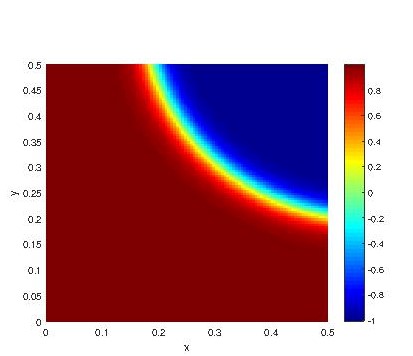}
\includegraphics[scale=0.19]{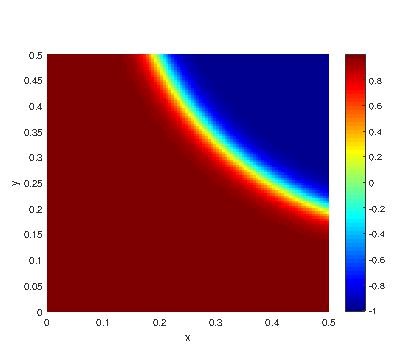}\\
\label{diffkdoublewell2}
\caption{ Numerical results of the KLLM model with the modified double-well potential at the time $t=0.1,\ 0.2,\ 0.4$ and $1$. From top to bottom: $K=0.01$, $K=1$ and $K=100$.  }
\end{figure}

\begin{figure}
\centering
\includegraphics[scale=0.25]{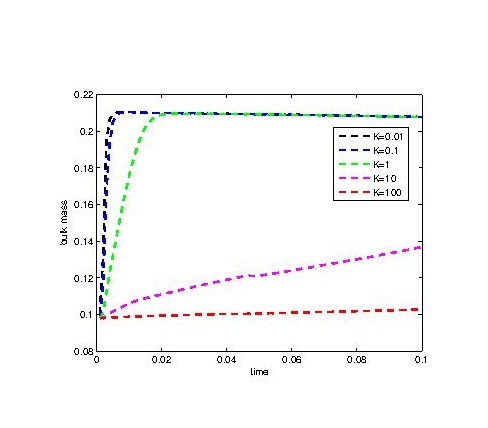}
\includegraphics[scale=0.25]{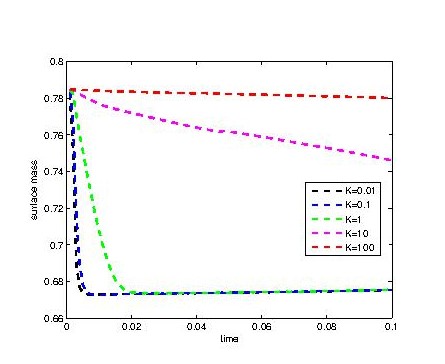}
\includegraphics[scale=0.25]{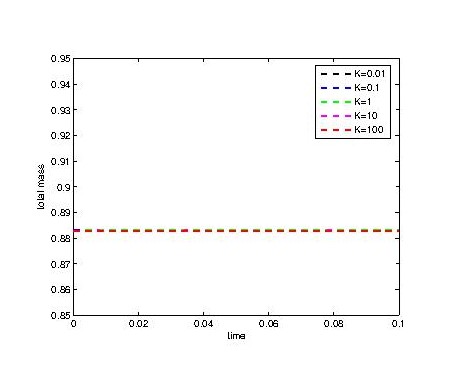}
\label{massplot1}
\caption{ Mass evolution of the KLLM model with the initial data shown in Fig. 1 and the modified double-well potential: the bulk mass evolution (left), the surface mass evolution (middle) and the total mass (right).}
\end{figure}



\begin{remark}
To the authors' knowledge, there is lack of maximum principle for the KLLM model.
Thus, theoretically, the values of $\phi$ and
$\psi$ can not be bounded in the physical relevant interval.

For the case of Flory-Huggins potential, the numerical experiments in Section 5.1 reveal that occurring values of $\phi$ and $\psi$ lie in the physical relevant interval.
For the case of the modified double-well potential, the maximal and minimal occurring values of $\phi$ and $\psi$ for different $K$ are plotted in Fig. 11 and 12.
We could conclude that the scheme proposed in this article can bound the numerical solutions
within the physical relevant interval only with some small fluctuation.
%
%

\end{remark}
\begin{figure}
\centering
\includegraphics[scale=0.35]{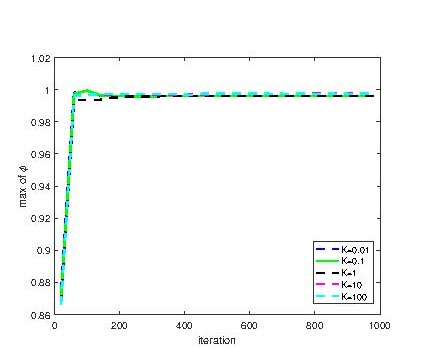}
\includegraphics[scale=0.35]{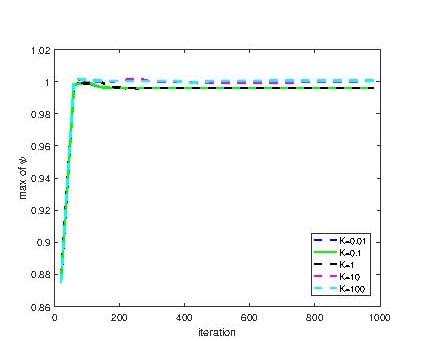}
\label{maxminplot01}
\caption{ Maximum values of $\phi$ (left) and $\psi$ (right) with respect to iterations with the initial data shown in Fig. 1 and the modified double-well potential.}
\end{figure}

\begin{figure}
\centering
\includegraphics[scale=0.35]{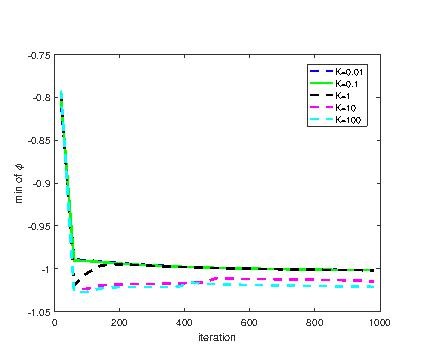}
\includegraphics[scale=0.35]{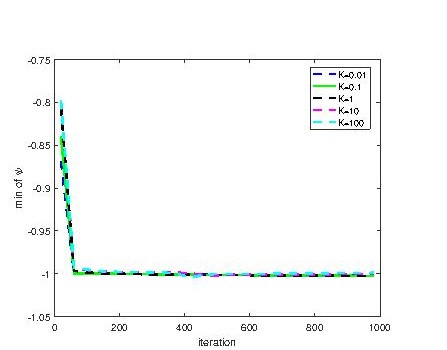}
\label{maxminplot1}
\caption{Minimum values of $\phi$ (left) and $\psi$ (right) with respect to iterations with the initial data shown in Fig. 1 and the modified double-well potential.}
\end{figure}

\subsection{Shape deformation of a droplet}

In this section, we consider the domain $\Omega=(0,1)^2\subset\mathbb{R}^2$ and place a square shaped droplet with center at $(0.5, 0.25)$ and the length of each side is 0.5 (see Fig. \ref{initial} ).
 The phase inside the droplet is set to be 1 and outside the droplet to be -1.
$F$ and $G$ are chosen to be of the regular double-well form shown in \eqref{modifyFG}.
%
And the parameters are set as
$$
\varepsilon=\delta=0.02, \  \kappa=1, \  s_1=s_2=50.
$$
We simulate the behaviour of the droplet from $t = 0$ to $T =0.1$ with the time step $\tau=2\times10^{-5}$ and the spatial step size $h=0.01$.

The evolution of the droplet is plotted in Fig. 14 for different $K$ ($K=0, 0.1, 1, 10, \infty$).
The corresponding evolution of mass and energy is plotted in Fig. \ref{mass_diffk} and Fig. \ref{energy_diffk}.
For the limiting case of $K=0$ and $K=\infty$, we use the scheme \eqref{liuwuscheme1}-\eqref{liuwuscheme6} and the scheme \eqref{GMSscheme1}-\eqref{GMSscheme5}, respectively.
In the case of the Liu-Wu model, namely, the case of $K=\infty$, the bulk mass $\int_{\Omega}\phi dx$ and the surface mass $\int_{\Gamma}\psi dS$ are conserved respectively (see Fig. \ref{mass_diffk}). Hence, in that case, the contact area on the boundary can not change. However, the square shaped droplet still evolves to attain the circular shape with constant mean curvature (see the last row in Fig. 14). When $K<\infty$, the conservation law of both the bulk and the boundary mass is relaxed and only the total mass $\int_{\Omega}\phi dx+\int_{\Gamma}\psi dS$ is conserved. Therefore, the contact area is allowed to grow (see the first four rows in Fig. 14) and the droplet's bulk mass is reduced. This phenomenon is intensifies when $K$ is decreasing. Meanwhile, the square shaped droplet also evolves to attain the circular shape when $K<\infty$. In addition, although we don't explicitly show the evolution of the total mass, we emphasize here that in our numerical experiments, the total mass is conserved for different $K$ ($K=0, 0.1, 1, 10, \infty$).

\begin{figure}
\centering
\includegraphics[scale=0.2]{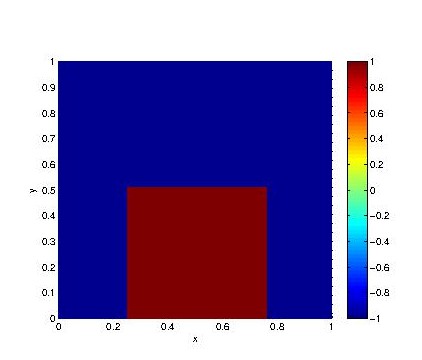}
\caption{The initial data of the square shaped droplet.}
\label{initial}
\end{figure}

\begin{figure}
\centering
\includegraphics[scale=0.19]{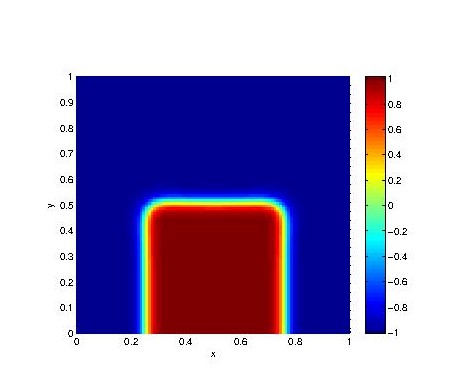}
\includegraphics[scale=0.19]{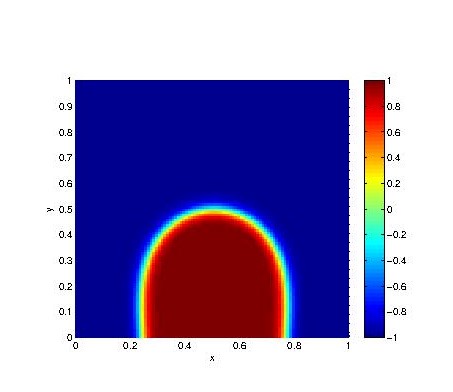}
\includegraphics[scale=0.19]{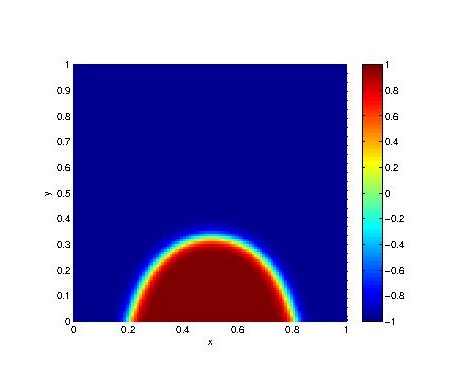}
\includegraphics[scale=0.19]{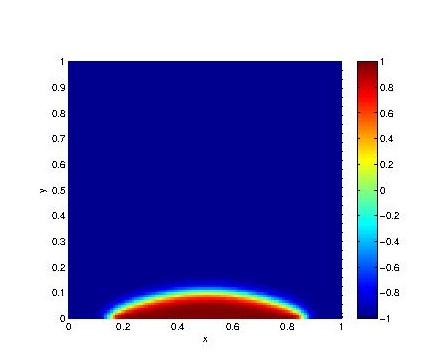}\\
\includegraphics[scale=0.19]{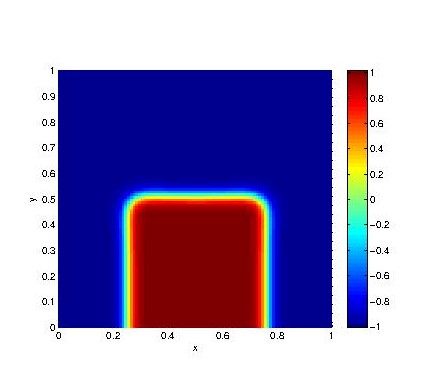}
\includegraphics[scale=0.19]{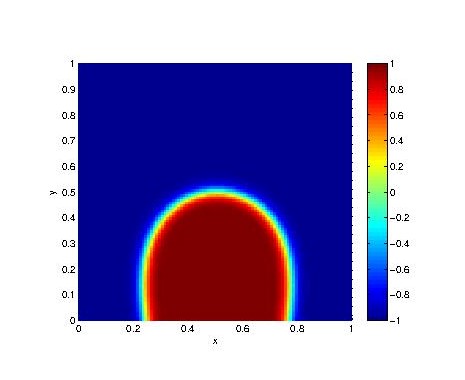}
\includegraphics[scale=0.19]{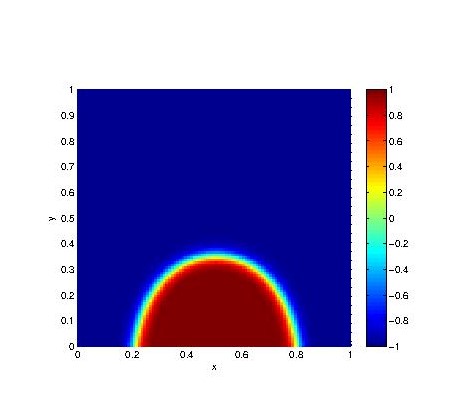}
\includegraphics[scale=0.19]{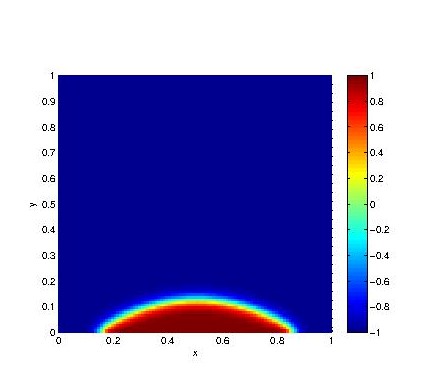}\\
\includegraphics[scale=0.19]{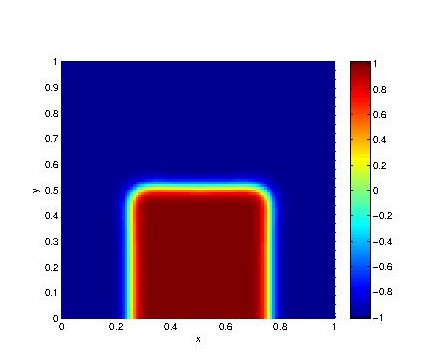}
\includegraphics[scale=0.19]{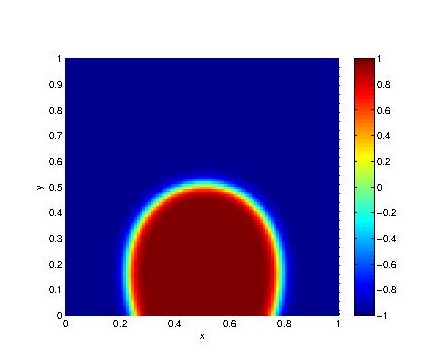}
\includegraphics[scale=0.19]{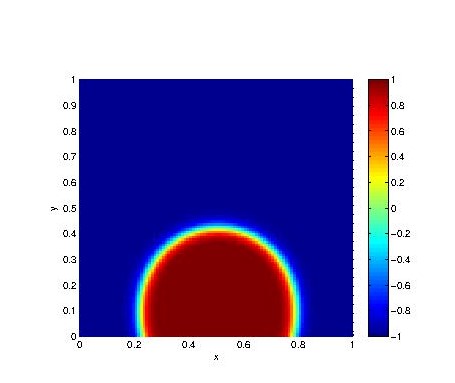}
\includegraphics[scale=0.19]{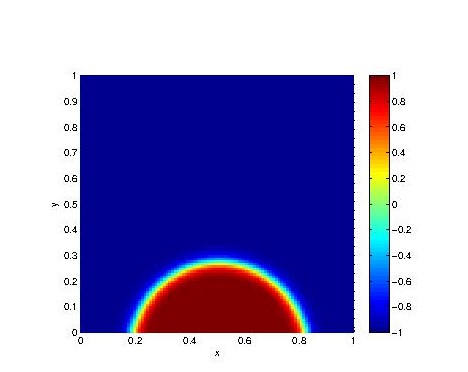}\\
\includegraphics[scale=0.19]{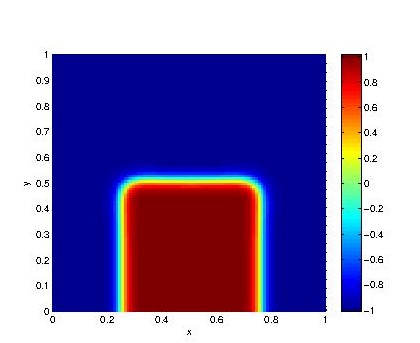}
\includegraphics[scale=0.19]{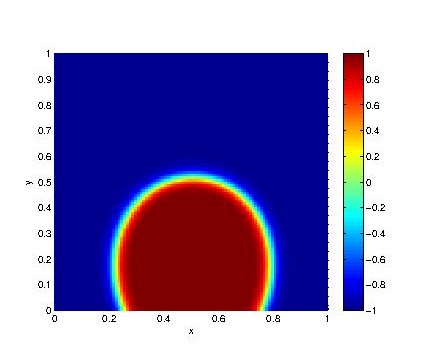}
\includegraphics[scale=0.19]{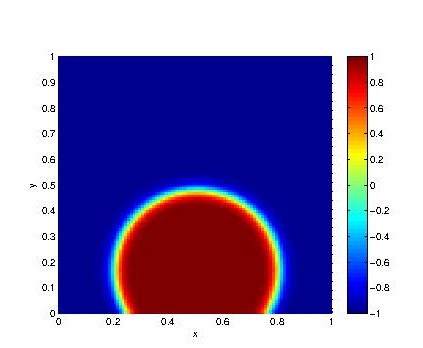}
\includegraphics[scale=0.19]{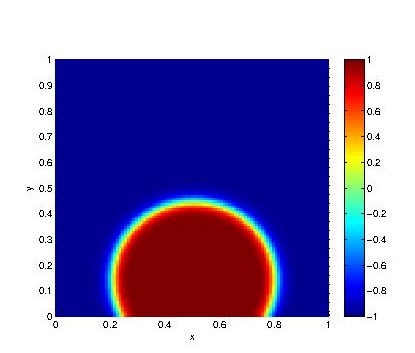}\\
\includegraphics[scale=0.19]{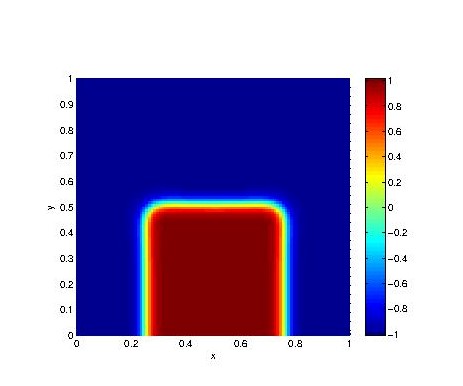}
\includegraphics[scale=0.19]{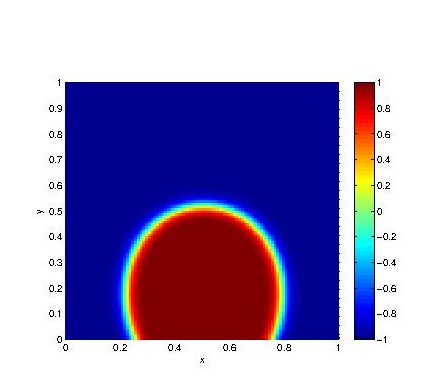}
\includegraphics[scale=0.19]{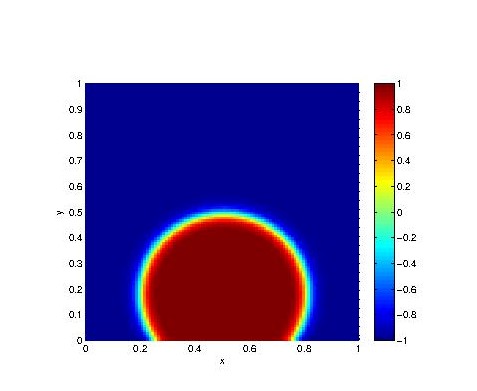}
\includegraphics[scale=0.19]{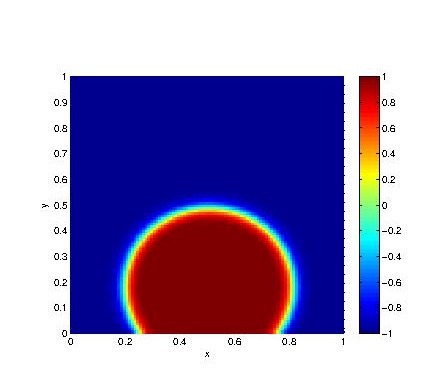}\\
\label{diffkevolution}
\caption{  Phase-field at $t=1e-4$, $t=0.01$, $t=0.04$ and $t=0.1$ with the initial data of the square shaped droplet. From top to bottom: $K=0$, $K=0.1$, $K=1$, $K=10$ and $K=\infty$.  }
\end{figure}

\begin{figure}
\centering
\includegraphics[scale=0.28]{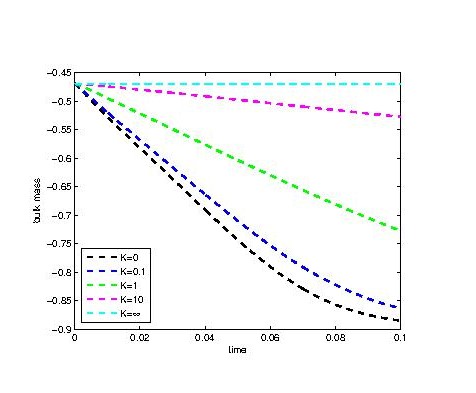}
\includegraphics[scale=0.28]{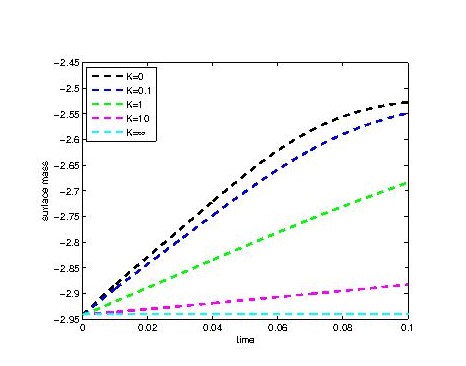}
\caption{  Time evolution of the bulk mass and the surface mass with different $K$ and the initial data of the square shaped droplet.}
\label{mass_diffk}
\end{figure}

\begin{figure}
\centering
\includegraphics[scale=0.28]{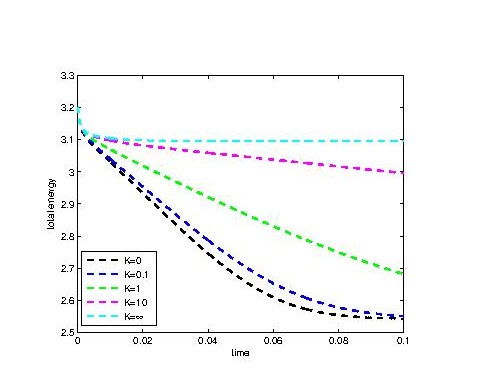}
\caption{  Time evolution of the total energy with different $K$ and the initial data of the square shaped droplet.}
\label{energy_diffk}
\end{figure}

The time evolutions of the total free energy is plotted in Fig. \ref{energy_diffk}, indicating that our numerical scheme is energy stable. We observe that an initial drop occurs for different $K$. After the initial drop, the evolution of the free energy greatly depends on $K$. When the energy in the case of $K=\infty$ stops decreasing and arrives at a stationary state, the energy still decreases for $K<\infty$. The results are consistent with the numerical results in \cite{knopf2020}.

\begin{remark}
In the numerical results above, we choose $\delta = \varepsilon$, $\kappa=1$ and the interfacial width on the boundary is the same as that in the bulk.
We notice that if we change the value of $\kappa$, the interfacial width on the boundary would not be the same as that in the bulk. Precisely, we can conclude from Fig. 17 and Fig. 18 that, since $\delta = \varepsilon$, when $\kappa > 1$ ($\kappa < 1$), the width on the boundary will be larger (smaller) than that in the bulk. In the authors' opinion, different values of $\kappa$ are related to the surface diffusion, which affects the width on the boundary.
\end{remark}
\begin{figure}
\centering
\includegraphics[scale=0.19]{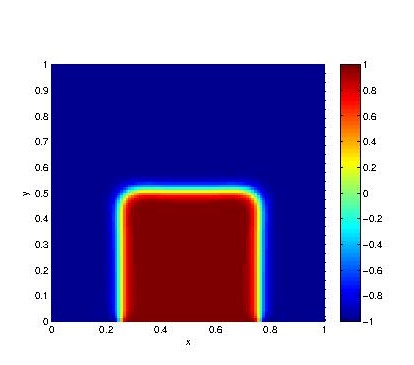}
\includegraphics[scale=0.19]{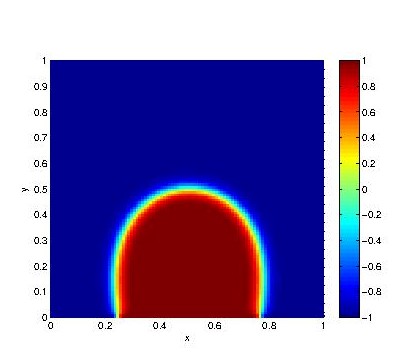}
\includegraphics[scale=0.19]{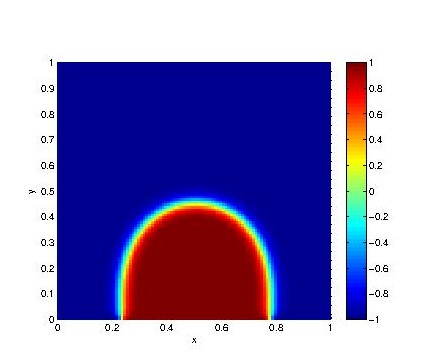}
\includegraphics[scale=0.19]{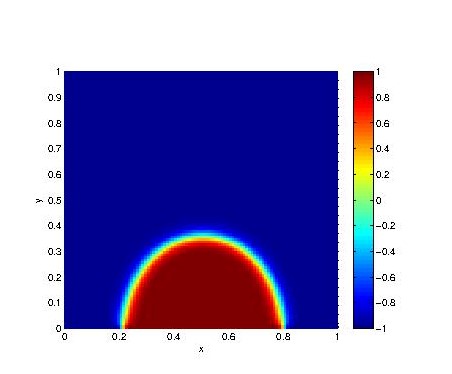}\\
\includegraphics[scale=0.19]{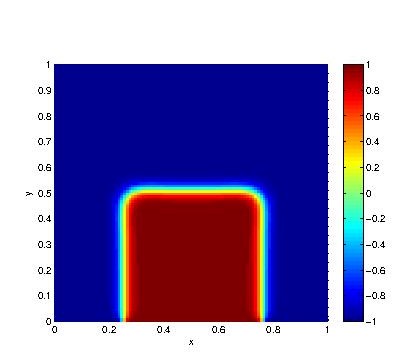}
\includegraphics[scale=0.19]{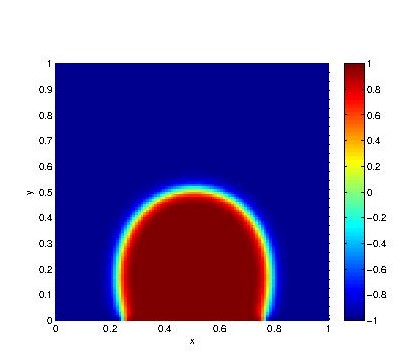}
\includegraphics[scale=0.19]{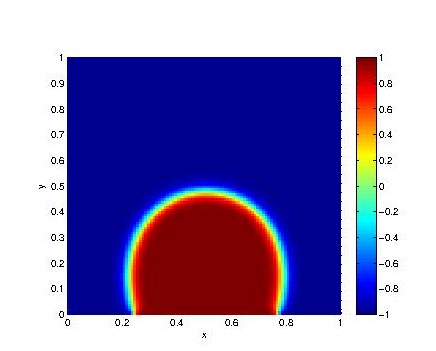}
\includegraphics[scale=0.19]{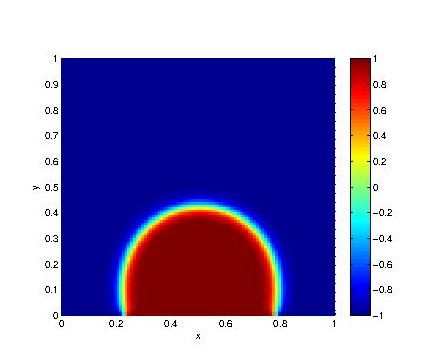}\\
\label{diffkappa1}
\caption{Phase-field at $t=1e-4$, $t=0.01$, $t=0.02$ and $t=0.04$ with the initial data of the square shaped droplet and $\kappa=0.25$: $K=0.1$ (top), $K=1$ (bottom). }
\end{figure}

\begin{figure}
\centering
\includegraphics[scale=0.19]{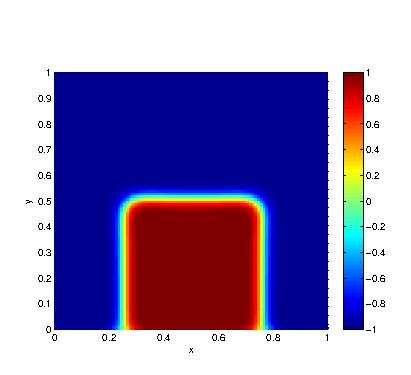}
\includegraphics[scale=0.19]{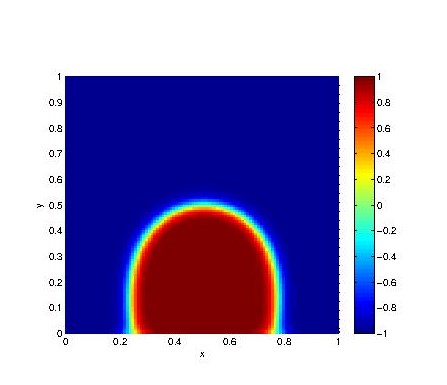}
\includegraphics[scale=0.19]{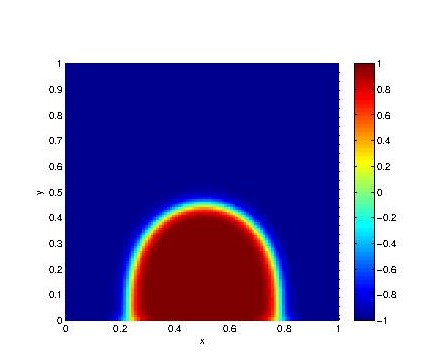}
\includegraphics[scale=0.19]{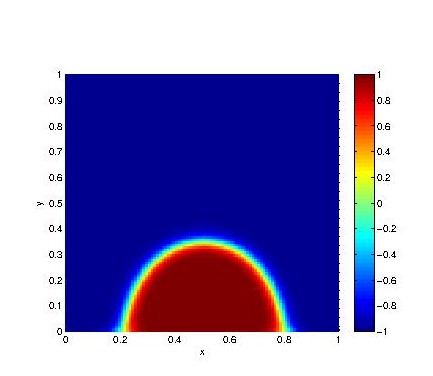}\\
\includegraphics[scale=0.19]{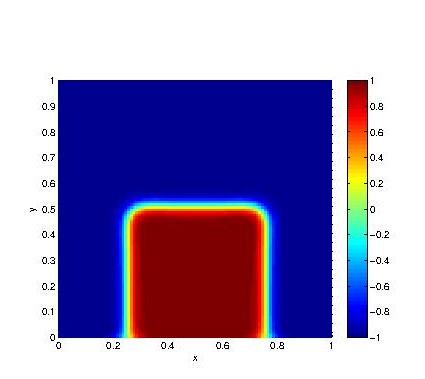}
\includegraphics[scale=0.19]{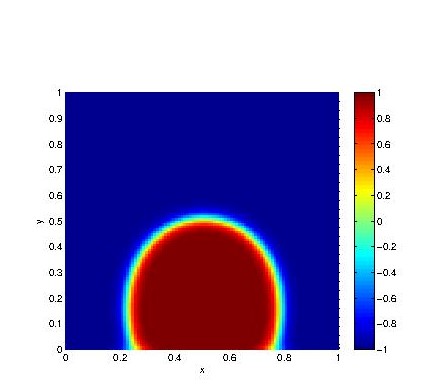}
\includegraphics[scale=0.19]{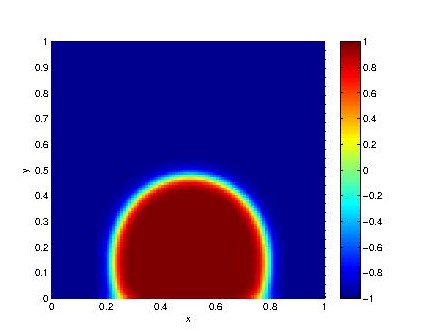}
\includegraphics[scale=0.19]{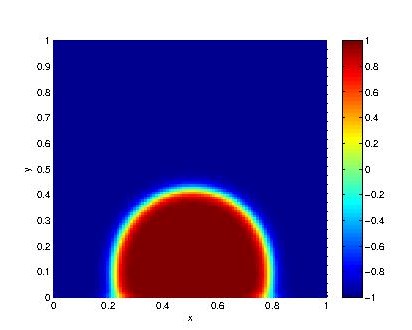}\\
\label{diffkappa2}
\caption{ Phase-field at $t=1e-4$, $t=0.01$, $t=0.02$ and $t=0.04$ with the initial data of the square shaped droplet and $\kappa=2.5$: $K=0.1$ (top), $K=1$ (bottom).  }
\end{figure}

Then we check the experimental order of convergence (EOC) of $\phi$ and $\psi$ for $K\rightarrow0$ and $K\rightarrow\infty$.
Here, the parameters are set as
$$
\varepsilon=\delta=0.02, \  \kappa=1, \  s_1=s_2=50, \ \tau=2\times1e-4.
$$
And we conduct numerical simulations from $t=0$ to $T=0.2$ with the spatial step size $h=0.01$.
Define $\phi_{*0}$ ($\psi_{*0}$) as the discrete solution under the case of $K=0$, $\phi_{*\infty}$ ($\psi_{*\infty}$) as the solution under the case of $K=\infty$ and $\phi_{K_i}$ ($\psi_{K_i}$) as the solution under the case of $K_i$.
First we compare the discrete solutions $\phi_{K_i}$ ($\psi_{K_i}$) with $\phi_{*0}$ ($\psi_{*0}$) for different $K_i$. The corresponding error is defined as
$$
Err_{i,0}=\|\phi_{K_i}-\phi_{*0}\|_{L^2(0,T; L^2(\Omega))} \ (\mbox{or}\ \|\psi_{K_i}-\psi_{*0}\|_{L^2(0,T; L^2(\Gamma))}),
$$
where the time integral is approximated using the trapezoidal rule with time increment $\tilde{\tau}=1e-3$. The experimental order is defined as
$$
EOC_{K_i}=\frac{\ln(\frac{Err_{i+1,0}}{Err_{i,0}})}{\ln(\frac{K_{i+1}}{K_{i}})}.
$$
Similarly, we can define the corresponding error and the experimental order for the case of $K\rightarrow\infty$.
The results for the convergence of $\phi$ and $\psi$ are shown in Table \ref{phito0} and Table \ref{psito0}, indicating that for $K\leq 1e-3$ and $K\geq 1e3$, the convergence rate is almost 1. The convergence rate obtained here is the same as that in \cite{knopf2020}.

\begin{table}
\centering
\begin{tabular}{c|cc} 
 K & $\|\phi_{K_i}-\phi_{*0}\|_{L^2(0,T; L^2(\Omega))}$ & EOC\\
 \hline
   1e-4 & 4.1965e-06 & -\\
   2*1e-4 & 8.3917e-06 & 0.9998\\
   5*1e-4 &2.0963e-05&0.9992\\
   1e-3 & 4.1876e-05 & 0.9983\\
   0.01 & 4.1058e-04 & 0.9914\\
   0.1 & 0.0036 & 0.9429\\
   1 & 0.0333 & 0.9661\\
\end{tabular}
\begin{tabular}{c|cc}
 K & $\|\phi_{K_i}-\phi_{*\infty}\|_{L^2(0,T; L^2(\Omega))}$ & EOC\\
 \hline
   1e4 & 1.0445e-05  & -\\
   5000 & 2.0886e-05 & -0.9997\\
   2500 & 4.1755e-05 & -0.9994\\
   2000 & 5.2182e-05 & -0.9990\\
   1000 & 1.0425e-04 & -0.9984\\
   100 & 0.0010 & -0.9819\\
   10 & 0.0086 &-0.9345\\
\end{tabular}
\caption{Comparison of $\phi$ for different $K$ with the solution for $K=0$(left) and $K=\infty$(right).}
\label{phito0}
\end{table}

\begin{table}
\centering
\begin{tabular}{c|cc}
 K & $\|\psi_{K_i}-\psi_{*0}\|_{L^2(0,T; L^2(\Gamma))}$ & EOC\\
 \hline
   1e-4 & 3.5417e-07 & -\\
   2*1e-4 & 7.0798e-07 & 0.9993\\
   5*1e-4 & 1.7681e-06 & 0.9989\\
   1e-3 & 3.5303e-06 & 0.9976\\
   0.01 & 3.4392e-05 & 0.9886\\
   0.1 & 2.9145e-04 & 0.9281\\
   1 & 0.0023 & 0.8972\\
\end{tabular}
\begin{tabular}{c|cc}
 K & $\|\psi_{K_i}-\psi_{*\infty}\|_{L^2(0,T; L^2(\Gamma))}$ & EOC\\
 \hline
   1e4 & 5.1383e-07 & -\\
   5000 & 1.0276e-06 & -0.9999\\
   2500 & 2.0549e-06 & -0.9998\\
   2000 & 2.5684e-06 & -0.9996\\
   1000 & 5.1343e-06 & -0.9993\\
   100 & 5.0937e-05 & -0.9966\\
   10 & 5.0450e-04 &-0.9958\\
\end{tabular}
\caption{Comparison of $\psi$ for different $K$ with the solution for $K=0$(left) and $K=\infty$(right).}
\label{psito0}
\end{table}

\subsection{Accuracy test}

In this section, we present numerical accuracy tests using the scheme \eqref{SIscheme1}-\eqref{SIscheme6} to support our error analysis.
Let $\Omega$ to be the unit square, the spatial step size $h=0.01$ and the parameters are chosen as $\varepsilon=\delta=0.02$, $\kappa=1$ and $s_1=s_2=50$. The initial data is set to be
\begin{equation}
\phi(x,y)=-\frac{1}2 \bigg{(} \tanh\frac{0.5-\sqrt{(x-0.5)^2+(y-0.5)^2}}{0.02}  \bigg{)}+\frac{1}2.
\end{equation}
In this section, we choose $F$ and $G$ to be the modified double-well potential \eqref{modifyFG}, and thus, the second derivative of $F$ with respect to $\phi$ and the second derivative of $G$ with respect to $\psi$ are bounded,
\begin{equation}
\max_{\phi\in\mathbb{R}} |F''(\phi)|=\max_{\psi\in\mathbb{R}} |G''(\psi)|\leq2.
\end{equation}

The errors are calculated as the difference between the solution of the coarse time step and that of the reference time step $\tau^*=10^{-5}$. In Fig. \ref{accuracytest} , we plot the $L^2$ errors of $\phi$ and $\psi$ between the numerical solution and the reference solution at $T = 0.1$ with different time step sizes in the cases of $K=1$ and $K=100$.
The results show clearly that the convergence rate of the numerical scheme
is the asymptotical at least first-order temporally for $\phi$ and $\psi$, which is consistent with our numerical analysis in Section \ref{s4}.

\begin{figure}
\centering
\includegraphics[scale=0.28]{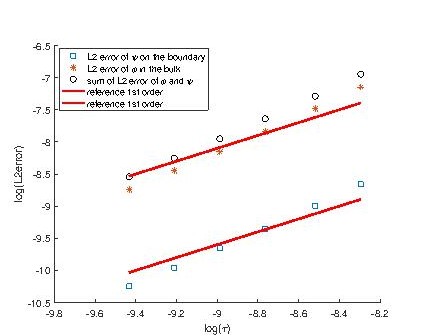}
\includegraphics[scale=0.28]{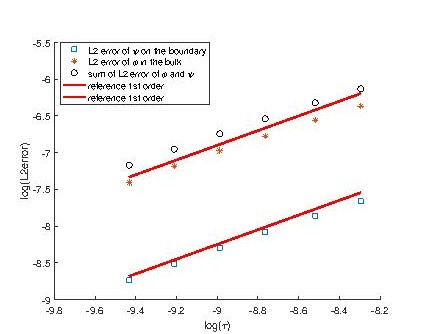}
\caption{ The $L^2$ numerical errors for $\phi$ and $\psi$ at $T = 0.1$ for $K=1$(left) and $K=100$(right).}
\label{accuracytest}
\end{figure}


\section{Conclusions}

In the present work, we consider numerical approximations and error analysis for the Cahn-Hilliard equation with reaction rate dependent dynamic boundary conditions ( P. Knopf et al., arXiv, 2020). This model can be interpreted as an interpolation between the Liu-Wu model(C. Liu and H. Wu, Arch. Rational Mech. Anal., 2019) and the GMS model(G.R. Goldstein et al., Physica D, 2011).

A first-order in time, linear and energy stable scheme
for solving this model is proposed.
The stabilization terms are utilized to enhance the stability of the scheme.
To the best of the authors' knowledge, this is the first linear and energy stable scheme for solving this new model. The semi-discretized-in-time error estimates for the scheme are also derived.

The numerical experiments are constructed in the two-dimensional space to validate the accuracy of the proposed scheme.
Moreover, the accuracy tests with respect to the time step size validate our error analysis. The convergence results for $K\rightarrow0$ and $K\rightarrow\infty$ are also illustrated, which are consistent with the former work.

%
%

\begin{center}
Acknowledgment
\end{center}

The authors would like to thank Prof. Chun Liu for some useful discussions on the subject of this article. X. Bao is thankful to Prof. Chun Liu, Prof. Yiwei Wang, Prof. Qing Cheng  and Prof. Tengfei Zhang for some stimulating discussions during the visit of Illinois Institute of Technology. X. Bao is also grateful to the Department of Applied Mathematics of Illinois Institute of Technology for the hospitality.
X. Bao is partially supported by China Scholarship Council (No. 201906040019). H. Zhang is partially supported by the National Natural Science Foundation of China (Nos. 11971002 and 11471046).

%
%



\end{document}